\definecolor{britishracinggreen}{rgb}{0.0, 0.26, 0.15}
\definecolor{amaranth}{rgb}{0.9, 0.17, 0.31}
\numberwithin{equation}{section}
\newtheorem{thm}{Theorem}
\newtheorem{cor}[thm]{Corollary}
\newtheorem{lem}[thm]{Lemma}
\newtheorem{prop}[thm]{Proposition}
\newtheorem{remark}{Remark}
\theoremstyle{definition}
\newtheorem{defn}{Definition}
\newtheorem{exmp}[thm]{Example}
\newcommand{\set}[2]{\ensuremath{\left\{ #1\,:\; #2\right\}}}
\newcommand{\sss}[1]{{\scriptscriptstyle #1}}
\newcommand{\converges}[1]{\mathop{\overset{\text{#1}}{\longrightarrow}}}
\newcommand{\N}{\mathds N}
\newcommand{\R}{\mathds R}
\newcommand{\lattice}{L}
\newcommand{\aaa}{\mathscr A}
\newcommand{\mm}{\mathscr M}
\newcommand{\nn}{\mathscr N}
\newcommand{\lbigvee}[1]{\bigvee^{\scriptscriptstyle #1}}
\newcommand{\lbigwedge}[1]{\bigwedge^{\scriptscriptstyle #1}}
\newcommand{\DM}{\text{DM}}
\begin{document}

	\title{Unbounded order convergence on infinitely distributive lattices}
	
	\author{Kevin Abela}
	\address{Kevin Abela \\ Department of Mathematics \\Faculty of Science \\University of Malta \\Msida MSD 2080, Malta} 
\email {kevin.abela.11@um.edu.mt}
	
	\author{Emmanuel Chetcuti}
	\address{Emmanuel Chetcuti\\Department of Mathematics\\Faculty of Science\\University of Malta\\Msida MSD 2080  Malta} 
\email {emanuel.chetcuti@um.edu.mt}

	\date{\today}
	


\begin{abstract}
We study $\mathfrak{u}$O-convergence on infinitely distributive lattices, extending key properties known from Riesz spaces. We show that order continuity of $\mathfrak{u}$O-convergence characterizes infinite distributivity. We examine O-adherence and $\mathfrak{u}$O-adherence of sublattices and ideals, proving that the $\mathfrak{u}$O- and O-closures of a sublattice coincide and form a sublattice, and that the first $\mathfrak{u}$O-adherence of an ideal is an O-closed ideal. We also analyze the Dedekind–MacNeille completion of a sublattice $Y$ within that of a lattice $L$, identifying conditions (A) and (B) under which the completion of $Y$ embeds regularly in that of $L$. In this case, we show that the first $\mathfrak{u}$O-adherence of $Y$ covers its O-closure.
\end{abstract}

\maketitle

\section{Introduction}

In the literature, order convergence has been thoroughly studied on Riesz spaces, lattices and partially ordered sets \cite{GAOLEU17,MathewsAnderson1967, Rennie1951, WOLK1961}. Thus, over the years, one can find several different definitions of O-convergence. The interested reader can look at \cite{KAECWH} to see different definitions and under which conditions these definitions agree or differ. A concept closely related to order convergence is unbounded order convergence. Unbounded order convergence ($\mathfrak{u}O$-convergence) was first introduced by Nakano under the name individual convergence \cite{HAK41, HAK48}. Later, DeMarr coined the commonly used term unbounded order convergence \cite{DEMARR}.  For sequences, $\mathfrak{u}O$-convergence is generally studied on Riesz spaces due to its natural relation to pointwise convergence; for sequences in $L_{p}(\mu)$ for $1 \leq p < \infty$ and finite measure $\mu$, $\mathfrak{u}O$-convergence is also equivalent to convergence almost everywhere. This relationship between almost everywhere convergence and $\mathfrak{u}O$-convergence was further investigated by Wickstead in \cite{WIC77}. He studied $\mathfrak{u}O$-convergence and weak convergence in Banach lattices, and showed that for norm bounded nets, weak and $\mathfrak{u}O$-convergence are equivalent. Kaplan studied $\mathfrak{u}O$-convergence on Riesz spaces with a weak order unit \cite{kaplan}. He showed that in a Riesz space with a weak order unit, $\mathfrak{u}O$-convergence has a simpler form. This form was used to give a new proof of a result by Hakano.

Gao and Xanthos showed that every weakly compact $\mathfrak{u}O$-convergent net is norm convergent in Banach lattices with the positive Schur property. The notion of $\mathfrak{u}O$-Cauchy nets was used to show that every relative weakly compact $\mathfrak{u}O$-Cauchy net is $\mathfrak{u}O$-convergent in an order continuous Banach lattice \cite{GNX14}. Gao studied $\mathfrak{u}O$-convergence in the dual of Banach spaces. He showed that every norm bounded $\mathfrak{u}O$-convergence net in $X^{*}$ is $w^{*}$-convergent if and only if $X$ has order continuous continuous norm. Furthermore, every $w^{*}$-convergent net in $X^{*}$ was shown to be $\mathfrak{u}O$-convergent if and only if $X$ is atomic with order continuous norm \cite{GAO14}.

A pivotal study on $\mathfrak{u}O$-convergence was done by Gao, Xanthos and Troitsky \cite{GTX17}. They proved that $\mathfrak{u}O$-convergence passes freely to and from regular Riesz sublattices. This was eventually used to improve several results in \cite{GAO14,GNX14}. They also proved that a Riesz sublattice $Y$ in a Riesz space $X$, is $O$-closed if and only if it is $\mathfrak{u}O$-closed. The relationships between $\mathfrak{u}O$-closure and $O$-closure was further investigated in \cite{GAOLEU17}. Bilokopytov and Troitsky studied $\mathfrak{u}O$-convergence in spaces of continuous functions, in particular $C(X), C_{b}(X), C_{0}(X)$ and $C^{\infty}(X)$ where $X$ is a completely regular Hausdorff topological space. They characterized $\mathfrak{u}O$-convergence in $C(X)$. Furthermore, they proved that a sequence $\mathfrak{u}O$-converges if and only if it converges pointwise on a co-meagre set \cite{TroitskyBilokopytov}.

In this article, we investigate $\mathfrak{u}$O-convergence on infinitely distributive lattices. We demonstrate that several properties that hold for Riesz spaces can also be studied in this broader setting. In section 3, we prove that $\mathfrak{u}$O-convergence being order continuous is equivalent to the lattice being infinitely distributive. In section 4, we study the O-adherence and $\mathfrak{u}$O-adherence of sublattices and ideals. In particular, we prove that the $\mathfrak{u}$O-closure and O-closure of a sublattice coincide and, the resulting set is itself a sublattice. Additionally, we prove that for ideals, the first $\mathfrak{u}$O-adherence is O-closed. Finally, we investigate the Dedekind MacNeille completion of a sublattice $Y$ with respect to the MacNeille completion of a lattice $L$. We identify two properties (A) and (B), under which the MacNeille completion of $Y$ embeds regularly in the MacNeille completion of $L$. Moreover, we prove that if a sublattice satisfies properties (A) and (B), then the first $\mathfrak{u}$O-adherence is O-closed.

\section{Notation}

A subset $D$ of a partially ordered set $P$ is \textit{directed} (resp. \emph{filtered}) provided it is non-empty and every finite subset of it has an upper bound (resp. lower bound) in $D$. For any two elements $s,t \in P$ such that $s \leq t$, we denote by $[s,t]$ the interval $\{x \in P : s \leq x \leq t\}$. For $A \subseteq P$ we denote by $\lbigvee{P} A$  the supremum of $A$ in ${P}$ (when this exists). Dually, we write $\lbigwedge{P} A$ for the infimum.  When it is clear in which space we are taking the supremum/infimum, we simply write $\bigvee A$ or $\bigwedge A$.  A \emph{lattice} is a partially ordered set in which every finite subset has an infimum and a supremum.  We shall normally denote a lattice by  ${L}$. A subset $Y \subseteq L$ is said to be a \textit{sublattice} if $a \vee^{\scriptscriptstyle L} b \in Y$ and $a \wedge^{\scriptscriptstyle L} b \in Y$ for every every $a,b \in Y$. A function $f$ from a lattice $K$ into a lattice $L$ is said to be a \textit{lattice-homomorphism} if $f(a\vee^{\sss K} b)=f(a)\vee^{\sss L} f(b)$ and $f(a\wedge^{\sss K} b)=f(a)\wedge^{\sss L}f(b)$ for every $a,b\in K$.  If the lattice-homomorphism  $f$ is injective, then we say that $f$ is an \emph{embedding}.  A sublattice $Y$ of $L$ is said to be \textit{regular}  if the inclusion embedding of $Y$ in $L$ preserves arbitrary infima and suprema.   A lattice $L$ satisfying $a \wedge (b \vee c) = (a \wedge b) \vee (a \wedge c)$ for every $a,b,c \in L$ is said to be a \textit{distributive lattice}.

Let $(x_\gamma)_{\gamma\in \Gamma}$ be a net in a set $X$. If $\Gamma'$ is some directed set and $\varphi:\Gamma'\to\Gamma$ is increasing and final\footnote{i.e. for every $\gamma\in \Gamma$ there exists $\gamma'\in\Gamma'$ such that $\varphi(\gamma')\ge \gamma$}, then the net $(x_{\varphi(\gamma')})_{\gamma'\in\Gamma'}$ is called a \emph{subnet} of $(x_\gamma)_{\gamma\in \Gamma}$.
For a general property of nets, say $P$,   we say that  \emph{$(x_\gamma)_{\gamma\in\Gamma}$ satisfies $P$ eventually}, if there exists a $\gamma_0\in \Gamma$ such the subnet
$(x_\gamma)_{\Gamma\ni\gamma\ge\gamma_0}$ has the property $P$.  If $(x_\gamma)_{\gamma\in\Gamma}$ is  increasing, its supremum exists and equals $x$, we write $x_\gamma\uparrow x$.  Dually, $x_\gamma\downarrow x$ means that the net $(x_\gamma)_{\gamma\in\Gamma}$ is  decreasing with infimum equal to $x$.

\section{O-convergence and $\mathfrak{u}$O-convergence}

In this section, we introduce the notions of O-convergence and $\mathfrak{u}$O-convergence within the framework of lattices. While O-convergence has been widely studied in the broader context of general lattices and partially ordered sets (posets) \cite{Erne1980,Gingras,MathewsAnderson1967, Rennie1951,Ward1955,WOLK1961}, the more recent concept of $\mathfrak{u}$O-convergence has predominantly been investigated in Riesz spaces and $\ell$-groups. Given that both Riesz spaces and $\ell$-groups are examples of infinitely distributive lattices, our work not only generalizes existing results on $\mathfrak{u}$O-convergence in these settings but also offers potential insights into the influence of the additive structure on the behavior of O- and $\mathfrak{u}$O-convergence.

\begin{prop}\cite[Proposition 3.10]{KAECWH2}\label{a}
	\begin{enumerate}[{\rm(a)}]
	\item For a lattice $L$ the following statements are equivalent.
	\begin{enumerate}[{\rm(i)}]
		\item  $L$ is distributive.
		\item For all $s,t\in L$ the function $f_{s,t}: L \to L$ defined by $f_{s,t}(x):= (x \wedge t) \vee s$ is a lattice homomorphism.
		\item  For all $s,t\in L$ the function $g_{s,t}: L \to L$ defined by  $g_{s,t}(x):= (x \vee s) \wedge t$ is a lattice homomorphism.
	\end{enumerate}
\item  If $L$ is distributive, then
$f_{s,t}=f_{s,s\vee t}=g_{s,s\vee t}$ and  $g_{s,t}=g_{s\wedge t,t}=f_{s\wedge t,t}$
for every $s,t\in L$.
\end{enumerate}
\end{prop}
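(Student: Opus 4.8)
The plan is to establish part (a) as a small cycle of implications organized around the two one-sided distributive laws, and then to obtain part (b) by direct computation once distributivity is available.

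For the implications $(i)\Rightarrow(ii)$ and $(i)\Rightarrow(iii)$ I would simply verify the homomorphism property by hand. For $f_{s,t}$, preservation of joins follows from $(x\vee y)\wedge t=(x\wedge t)\vee(y\wedge t)$, which after joining with $s$ and using associativity and idempotence of $\vee$ gives $f_{s,t}(x\vee y)=f_{s,t}(x)\vee f_{s,t}(y)$; preservation of meets follows from the identity $((x\wedge y)\wedge t)\vee s=((x\wedge t)\vee s)\wedge((y\wedge t)\vee s)$, which is an instance of the join-over-meet law applied to $x\wedge t$ and $y\wedge t$. The claim for $g_{s,t}$ is the order-dual of this, so it requires no separate argument.

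The real content of (a) lies in the two converse implications, and here the main point — the step I expect to be the crux — is spotting the correct specialization of $f_{s,t}$ (resp.\ $g_{s,t}$); once this is found the rest is forced. The key observation is that for the converse one does not need the full homomorphism property but only one half of it. For $(ii)\Rightarrow(i)$ I would fix $a,b,c\in L$ and apply join-preservation of $f_{s,t}$ with $s:=a\wedge c$, $t:=a$, evaluated at $x:=b$, $y:=c$. The left-hand side $((b\vee c)\wedge a)\vee(a\wedge c)$ collapses to $a\wedge(b\vee c)$ since $a\wedge c\le a\wedge(b\vee c)$, while the right-hand side simplifies to $(a\wedge b)\vee(a\wedge c)$; equating the two is exactly the distributive law. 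Dually, $(iii)\Rightarrow(i)$ comes from meet-preservation of $g_{s,t}$ with $s:=a$, $t:=a\vee c$, $x:=b$, $y:=c$, which produces the equivalent join-over-meet identity $a\vee(b\wedge c)=(a\vee b)\wedge(a\vee c)$.

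For part (b), assuming distributivity, each of the four claimed equalities is a short computation in which an auxiliary term is absorbed. For $f_{s,t}=f_{s,s\vee t}$ I would expand $x\wedge(s\vee t)=(x\wedge s)\vee(x\wedge t)$ and note that $x\wedge s$ is swallowed by the outer join with $s$; for $f_{s,t}=g_{s,s\vee t}$ I would distribute $(x\vee s)\wedge(s\vee t)$ and again discard a term lying below $s$. The remaining two identities are the order-duals. No genuine obstacle arises here beyond bookkeeping; the only point needing care throughout is the repeated, and legitimate, use of the fact that in a lattice the two distributive laws are equivalent, so that a single hypothesis about $f$ (preservation of joins) or about $g$ (preservation of meets) already forces full distributivity.
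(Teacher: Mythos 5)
The paper does not prove this proposition --- it is quoted verbatim from \cite[Proposition 3.10]{KAECWH2} --- so there is no in-paper argument to compare against. Your proof is correct and is the standard one: the forward directions are routine verifications using the equivalence of the two one-sided distributive laws, the converses follow from the well-chosen specializations $s:=a\wedge c,\ t:=a$ (resp.\ the dual), and the identities in (b) reduce to absorption after distributing. All the computations check out.
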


\begin{defn}\label{d1}
Let $(x_\gamma)_{\gamma\in \Gamma}$ be a net and  $x$ a point in a lattice $L$.
\begin{enumerate}[{\rm(i)}]
\item  $(x_\gamma)_{\gamma \in \Gamma}$ is said to \emph{order converge} (\emph{O-converge}) to $x\in L$ if  there exists a directed set $M\subseteq L$ and a filtered  set $N\subseteq L$ satisfying $\bigvee M=\bigwedge N=x$, and such that for every $(m,n)\in M\times N$, $(x_{\gamma})_{\gamma\in\Gamma}$ is eventually contained in $[m,n]$.  In this case we write $x_\gamma\converges{O}x$.
    \item $(x_\gamma)_{\gamma \in \Gamma}$ is said to \emph{unbounded order converge} (\emph{$\mathfrak{u}$O-converge}) to $x \in L$, if $ (x_{\gamma} \wedge t) \vee s  \converges{O} (x \wedge t) \vee s$ for every $s,t \in L$ and $s \leq t$.  In this case we write $x_\gamma\converges{$\mathfrak{u}O$}x$.
\end{enumerate}
\end{defn}

The subsequent remark presents a collection of immediate consequences derived from these definitions.

\begin{remark}\label{remark}
The following assertions are easily verified. 
\begin{enumerate}[{\rm(i)}]
    \item If $x_{\gamma}\uparrow x$ in  $L$ then $x_{\gamma} \converges{O} x $.  The dual statement for decreasing nets holds as well.  \\    
    \item If a net O-converges, then the order limit is unique. Let us verify that if a net $\mathfrak{u}$O-converges, then the $\mathfrak{u}$O-limit is unique.  Indeed, if $x_\gamma\converges{$\mathfrak{u}O$}x$ and $x_\gamma\converges{$\mathfrak{u}O$}y$, then $(x_\gamma\wedge t)\vee s\converges{O} x$ and $(x_\gamma\wedge t)\vee s\converges{O} y$, for every $s\le t$ in $L$.  This implies that $(x\wedge t)\vee s=(y \wedge t)\vee s$ for every $s\le t$ in $L$.  In particular, setting $s:=x\wedge y$ and $t:=x\vee y$, one gets
        \[x=(x\wedge(x\vee y))\vee(x\wedge y)=(y\wedge(x\vee y))\vee(x\wedge y)=y\,.\]
     \smallskip
    \item If $(x_\gamma)_{\gamma\in \Gamma}$ is O-convergent to $x$, and eventually $(x_\gamma)_{\gamma\in\Gamma}$ is contained in $(\leftarrow, a]$, then $x\le a$.  The dual statement holds as well.\\
    \item If $L$ is bounded,  $\mathfrak u$O-convergence  implies O-convergence.\\
    \item In the light of Proposition \ref{a},  if the lattice is distributive, the condition $s \leq t$ in the definition of $\mathfrak{u}O$-convergence becomes redundant: $x_\gamma\converges{$\mathfrak u$O}x$ iff $(x_{\gamma} \vee s) \wedge t  \converges{O} (x \vee s) \wedge t$ for every $s,t \in L$.  
   \end{enumerate}
 \end{remark}

The following proposition demonstrates that when the lattice is a commutative $\ell$-group (and, in particular, when it is a Riesz space), the unbounded-order convergence defined in Definition \ref{d1} coincides with the well-established notion of unbounded convergence on such structures.

\begin{prop}\cite[Prop. 7.2]{Papangelou Fredos}\label{2}
	For the net $(x_\gamma)_{\gamma \in \Gamma}$, and the point $x$, in a commutative $\ell$-group $(\mathscr G,+)$, the following  statements are equivalent:
 \begin{enumerate}[{\rm(i)}]
     \item $\vert x_{\gamma} - x \vert\wedge u  \converges{O} 0 $ for every $u \in \mathscr G_{+}$,
     \item $(x_{\gamma} \wedge t) \vee s  \converges{O} (x \wedge t) \vee s$ for every $s,t \in \mathscr{G}$.
 \end{enumerate}
\end{prop}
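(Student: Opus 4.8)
My plan is to exploit the algebraic structure of the $\ell$-group---translation-invariance, finite distributivity, and the nonexpansiveness of the lattice operations---so as to reduce both implications to a squeeze argument for O-convergence. I will freely use the standard identities $a^{+}=a\vee 0$, $a^{-}=(-a)\vee 0$, $|a|=a^{+}+a^{-}$ and $|a-b|=(a\vee b)-(a\wedge b)$, the fact that for fixed $s,t$ the maps $y\mapsto y\wedge t$ and $y\mapsto y\vee s$ are nonexpansive (i.e. $|(a\wedge t)-(b\wedge t)|\le|a-b|$ and $|(a\vee s)-(b\vee s)|\le|a-b|$), and that $\mathscr G$ is a distributive lattice. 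I also intend to isolate three elementary properties of O-convergence that follow directly from Definition \ref{d1}(i) by combining the witnessing directed and filtered sets and using translation-invariance: the \emph{squeeze property} (if $a_\gamma\le y_\gamma\le b_\gamma$ eventually and $a_\gamma\converges{O}z$, $b_\gamma\converges{O}z$, then $y_\gamma\converges{O}z$), \emph{compatibility with negation} ($a_\gamma\converges{O}a$ implies $-a_\gamma\converges{O}-a$), and \emph{compatibility with addition} ($a_\gamma\converges{O}a$ and $b_\gamma\converges{O}b$ imply $a_\gamma+b_\gamma\converges{O}a+b$). In particular $|y_\gamma-y|\converges{O}0$ will imply $y_\gamma\converges{O}y$.

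For (i) $\Rightarrow$ (ii), I would fix $s,t\in\mathscr G$ and set $w_\gamma:=(x_\gamma\wedge t)\vee s$, $w:=(x\wedge t)\vee s$. Composing the two nonexpansiveness estimates gives $|w_\gamma-w|\le|x_\gamma-x|$, while both $w_\gamma$ and $w$ lie in $[s,\,s\vee t]$, so that $|w_\gamma-w|\le(s\vee t)-s=:u_0\in\mathscr G_{+}$. Since the meet of two upper bounds of $|w_\gamma-w|$ is again an upper bound, $|w_\gamma-w|\le|x_\gamma-x|\wedge u_0$, and the right-hand side O-converges to $0$ by hypothesis (i) applied with $u:=u_0$. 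The squeeze property then yields $|w_\gamma-w|\converges{O}0$, hence $w_\gamma\converges{O}w$; as $s,t$ were arbitrary this is exactly (ii).

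For (ii) $\Rightarrow$ (i), I would fix $u\in\mathscr G_{+}$ and recognise $(x_\gamma-x)^{+}\wedge u$ and $(x_\gamma-x)^{-}\wedge u$ as translates of expressions of the form $(x_\gamma\wedge t)\vee s$. Using distributivity and translation-invariance one verifies $(x_\gamma-x)^{+}\wedge u=\bigl[(x_\gamma\wedge(x+u))\vee x\bigr]-x$ and $(x_\gamma-x)^{-}\wedge u=x-\bigl[(x_\gamma\wedge x)\vee(x-u)\bigr]$, where the cross terms collapse because $x\le x+u$ and $x-u\le x$. Applying (ii) with $(s,t)=(x,x+u)$ and with $(s,t)=(x-u,x)$ respectively, followed by compatibility with translation and negation, gives $(x_\gamma-x)^{+}\wedge u\converges{O}0$ and $(x_\gamma-x)^{-}\wedge u\converges{O}0$. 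From $|x_\gamma-x|=(x_\gamma-x)^{+}+(x_\gamma-x)^{-}$ and the $\ell$-group inequality $(a+b)\wedge u\le(a\wedge u)+(b\wedge u)$ for $a,b\in\mathscr G_{+}$ (which follows from $(a\wedge u)+(b\wedge u)=(a+b)\wedge(a+u)\wedge(b+u)\wedge(2u)$), I obtain $0\le|x_\gamma-x|\wedge u\le\bigl[(x_\gamma-x)^{+}\wedge u\bigr]+\bigl[(x_\gamma-x)^{-}\wedge u\bigr]$. The right-hand side O-converges to $0$ by compatibility with addition, and a final squeeze delivers $|x_\gamma-x|\wedge u\converges{O}0$, i.e. (i).

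The genuinely load-bearing work is the establishment of the three auxiliary O-convergence lemmas (squeeze, negation, addition) for the particular directed/filtered formulation of O-convergence in Definition \ref{d1}(i); the only point requiring care is checking that the suprema and infima of the combined witnessing sets are correct, which rests on translation-invariance in the po-group $(\mathscr G,+)$. I expect this to be the main obstacle to a fully rigorous write-up, rather than the computations themselves. The remaining task---the distributive identities rewriting $(x_\gamma-x)^{\pm}\wedge u$ in terms of $(x_\gamma\wedge t)\vee s$, and the subadditivity inequality---is short and follows directly from the $\ell$-group axioms, but it is the precise bridge between the two formulations and must be recorded carefully.
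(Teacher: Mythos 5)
The paper offers no proof of this proposition: it is quoted verbatim from Papangelou \cite{Papangelou Fredos}, so there is no internal argument to compare yours against. Your blind proof is, as far as I can check, correct and self-contained. The three auxiliary facts (squeeze, negation, addition) do follow from Definition \ref{d1}(i): for the squeeze one simply takes the directed witness of the lower net together with the filtered witness of the upper net (both nonempty by the paper's conventions), and for addition one uses that $\bigvee(M_a+M_b)=\bigvee M_a+\bigvee M_b$ in a po-group by translation invariance. In (i)$\Rightarrow$(ii) the Birkhoff nonexpansiveness estimates and the bound $|w_\gamma-w|\le (s\vee t)-s$ are both valid, so $|w_\gamma-w|\le |x_\gamma-x|\wedge u_0$ and the squeeze applies. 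In (ii)$\Rightarrow$(i) both rewriting identities check out, e.g. $x-\bigl[(x_\gamma\wedge x)\vee(x-u)\bigr]=\bigl((x-x_\gamma)\vee 0\bigr)\wedge u=(x_\gamma-x)^{-}\wedge u$, and the subadditivity $(a+b)\wedge u\le (a\wedge u)+(b\wedge u)$ for $a,b,u\ge 0$ follows from your expansion $(a\wedge u)+(b\wedge u)=(a+b)\wedge(a+u)\wedge(b+u)\wedge 2u$, each of whose four meetands dominates $(a+b)\wedge u$. A full write-up would only need to record these auxiliary lemmas explicitly, but no idea is missing.
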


\begin{prop}\label{38}
	Let $(x_\gamma)_{\gamma\in\Gamma}$ be a net in a  lattice $L$.  
	\begin{enumerate}[{\rm(i)}]
	\item If $x_{\gamma} \converges{$\mathfrak{u}$O} x$,  and eventually $(x_\gamma)_{\gamma\in\Gamma}$ is contained in $(\leftarrow, u]$, then $x\le u$.  The dual statement holds as well.
	\item If $(x_{\gamma})_{\gamma \in \Gamma}$ is monotonic, the following implication holds:	
	\[x_{\gamma} \converges{$\mathfrak{u}$O} x\quad\Longrightarrow\quad \begin{cases}
x_{\gamma}\uparrow x\text{ (if the net is increasing)},\\
x_{\gamma}\downarrow x\text{ (if the net is decreasing)}.
\end{cases}\]
	\end{enumerate}
\end{prop}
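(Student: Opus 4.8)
The plan is to reduce everything to Remark \ref{remark}(iii), which records that an O-limit respects eventual one-sided bounds, and to obtain part (ii) as a direct consequence of part (i). The whole content of part (i) lies in feeding the definition of $\mathfrak u$O-convergence with a judiciously chosen pair $s\le t$, so that the truncated net $(x_\gamma\wedge t)\vee s$ becomes eventually constant while its O-limit still records the comparison between $x$ and $u$.

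For part (i), suppose $x_\gamma\converges{$\mathfrak{u}$O}x$ and the net is eventually contained in $(\leftarrow,u]$. I would take $t:=x\vee u$ and $s:=u$, noting $s\le t$. Then by definition $(x_\gamma\wedge t)\vee s\converges{O}(x\wedge t)\vee s=x\vee u$. On the other hand, the eventual bound $x_\gamma\le u\le x\vee u$ gives $x_\gamma\wedge(x\vee u)=x_\gamma\le u$, whence $(x_\gamma\wedge t)\vee s=x_\gamma\vee u=u$ eventually. Thus the truncated net O-converges to $x\vee u$ yet is eventually $\le u$, and Remark \ref{remark}(iii) forces $x\vee u\le u$, i.e. $x\le u$. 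The dual statement is handled by the symmetric choice $s:=x\wedge u$, $t:=u$: here the O-limit is $x\wedge u$, while the eventual lower bound $x_\gamma\ge u$ gives $x_\gamma\wedge u=u$ and hence $(x_\gamma\wedge u)\vee(x\wedge u)=u$ eventually, so the dual form of Remark \ref{remark}(iii) yields $x\wedge u\ge u$, that is $u\le x$.

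For part (ii), I would derive the conclusion straight from part (i). Assume the net is increasing and $x_\gamma\converges{$\mathfrak{u}$O}x$. To show that $x$ is an upper bound, fix $\gamma_0$; monotonicity gives $x_\gamma\ge x_{\gamma_0}$ for all $\gamma\ge\gamma_0$, so the net is eventually contained in $[x_{\gamma_0},\rightarrow)$, and the dual form of part (i) gives $x\ge x_{\gamma_0}$. Since $\gamma_0$ is arbitrary, $x$ bounds the net above. To see it is the least such bound, let $u$ be any upper bound; then $x_\gamma\le u$ for every $\gamma$, so part (i) gives $x\le u$. Hence $x=\bigvee_\gamma x_\gamma$, i.e. $x_\gamma\uparrow x$, and the decreasing case is entirely dual. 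The only genuinely delicate point is the selection of the truncating pair in part (i): one must arrange $s$ and $t$ so that the eventual one-sided bound on $x_\gamma$ propagates to a one-sided bound on the entire truncated net $(x_\gamma\wedge t)\vee s$, while its O-limit remains sensitive to the position of $x$ relative to $u$. Once the pair $(u,\,x\vee u)$ (and dually $(x\wedge u,\,u)$) is found, the argument collapses to the already-established Remark \ref{remark}(iii).
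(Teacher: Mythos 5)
Your proof is correct and follows essentially the same route as the paper: both arguments feed the definition of $\mathfrak u$O-convergence with a truncation pair built from $x$ and $u$ (you use $(u,\,x\vee u)$, the paper uses $(u\wedge x,\,u\vee x)$) and then invoke Remark \ref{remark}(iii), with part (ii) deduced from part (i) applied to tails of the net. Your treatment of (ii) is in fact slightly more explicit than the paper's, since you verify minimality of the upper bound $x$ directly from (i) rather than passing through O-convergence.
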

\begin{proof}
{\rm(i)}~Assume that $x_{\gamma} \converges{$\mathfrak{u}$O} x$, and $x_\gamma\le u$ for every $\gamma\ge \gamma_0$.  Define $a:=u\wedge x$ and $b:=u\vee x$.  We have $(x_\gamma\wedge b)\vee a\converges{O}(x\wedge b)\vee a$,  and by  Remark \ref{remark}{\rm(iii)} we obtain $x=(x\wedge b)\vee a\le(u\wedge b)\vee a=u$.  The dual statement can be proved similarly.  

{\rm(ii)}~ Assume that $(x_\gamma)_{\gamma\in\Gamma}$ is monotonic increasing and $x_{\gamma} \converges{$\mathfrak{u}$O} x$.  For $\gamma_{0} \in \Gamma$ the net $(x_\gamma)_{\gamma\ge \gamma_0}$ is contained in $[x_{\gamma_0}, \rightarrow)$ and $\mathfrak u$O-convergent to $x$.  {\rm(i)} implies  $x\ge x_{\gamma_0}$.    Therefore
$x$ is an upper bound for $(x_{\gamma})_{\gamma \in \Gamma}$.  Hence, $x_\gamma\converges{O}x$ and therefore $x=\bigvee_{\gamma\in\Gamma}x_\gamma$.

\end{proof}

The following example illustrates that the converse of Proposition \ref{38} (ii) may fail, even in the context of distributive lattices. This stands in sharp contrast to the case of Riesz spaces, where 
$\mathfrak{u}$O-convergence is order continuous\footnote{That is, if a net is order convergent to a point, then it is also 
$\mathfrak{u}$O-convergent to the same point.}. In such spaces, the desired implication follows directly from Remark \ref{remark} (i). As will become evident, the order continuity of 
$\mathfrak{u}$O-convergence is a property that arises specifically in infinitely distributive lattices.

\begin{exmp}
Let $L$ denote the collection of all the closed subsets of $\R$.  When endowed with set inclusion, $L$ forms a bounded distributive lattice. For $n\in\N$ let $X_n:=\left[{2^{-n}},\infty\right)$ and let $X:=[0,\infty)$. Then $(X_n)_{n\in\N}$ is increasing and $\lbigvee{L}X_n=X$, i.e. $X_n\uparrow X$ in $L$. In particular, $X_n\converges{\text{O}} X$.   On the  other hand, $(X_n)_{n\in\N}$ does not $\mathfrak{u}O$-converge to $X$.  To see this, let $A:=(-\infty,-1]$ and $B:=(-\infty,0]$.  Then  $(X_n\wedge B)\vee A=A$ for every $n\in\N$, i.e. $(X_n\wedge B)\vee A\converges{\text{O}} A$.  But $(X\wedge B)\vee A=\{0\}\cup A$.
\end{exmp}

\begin{defn}
\begin{enumerate}[{\rm(i)}]
    \item A lattice $L$ is said to satisfy the \textit{meet-infinite distributive law} if for $x\in L$ and $\{x_{\alpha} : \alpha \in \mathcal{A}\} \subseteq L$ such that $\bigwedge_{\alpha  \in \mathcal{A}} x_{\alpha}$ exists in $L$, 
	\begin{equation*}
		x \vee \bigwedge _{\alpha  \in \mathcal{A}} x_{\alpha} = \bigwedge_{\alpha \in \mathcal{A}} (x \vee x_{\alpha}).
	\end{equation*}
 
 \item A lattice $L$ is said to satisfy the \textit{join-infinite distributive law} if for $x \in L$ and $\{x_{\alpha} : \alpha \in \mathcal{A}\}\subseteq L$ such that $\bigvee_{\alpha  \in \mathcal{A}} x_{\alpha}$ exists in $L$,
	\begin{equation*}
		x \wedge \bigvee_{\alpha  \in \mathcal{A}} x_{\alpha} = \bigvee_{\alpha \in \mathcal{A}} (x \wedge x_{\alpha}).
	\end{equation*}

 \item  A lattice satisfying both the join and meet-infinite distributive laws is called an \textit{infinitely distributive lattice}.
\end{enumerate}
	\end{defn}

For subsets $A$, $B$ of a lattice $L$, we write $A\vee B$ to denote the set $\{a\vee b:a\in A,\,b\in B\}$.  ($A\wedge B$ is defined analogously.)

\begin{prop}\label{3}
	Let $L$ be an infinitely distributive lattice.  If $(x_{\gamma})_{\gamma \in \Gamma}  \converges{O} x$ and  $(y_{\omega})_{\omega \in \Omega} \converges{O} y$, then the net 
	\[\bigl((x_{\gamma} \vee y_{\omega}):(\gamma , \omega) \in \Gamma \times \Omega\bigr)\]
	O-converges to $x \vee y$, and dually,
	\[\bigl((x_{\gamma} \wedge y_{\omega}):(\gamma , \omega) \in \Gamma \times \Omega\bigr)\]
	O-converges to $x \wedge y$.
\end{prop}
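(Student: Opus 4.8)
The plan is to manufacture explicit witnessing sets for the product net out of the witnessing sets for the two given nets. Since $(x_\gamma)_{\gamma\in\Gamma}\converges{O}x$, fix a directed set $M_1$ and a filtered set $N_1$ with $\bigvee M_1=\bigwedge N_1=x$ such that $(x_\gamma)$ is eventually in $[m,n]$ for each $(m,n)\in M_1\times N_1$; similarly fix $M_2,N_2$ witnessing $(y_\omega)_{\omega\in\Omega}\converges{O}y$. For the join net I would take $M:=M_1\vee M_2$ and $N:=N_1\vee N_2$ and claim that these witness $(x_\gamma\vee y_\omega)\converges{O}x\vee y$.

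First I would dispatch the easy structural facts. The set $M$ is directed and $N$ is filtered, each inherited from the directedness of $M_1,M_2$ and the filteredness of $N_1,N_2$ by forming upper (resp.\ lower) bounds coordinatewise: given $m_1\vee m_1'$ and $m_2\vee m_2'$ in $M$, choose $m_3\in M_1$ above $m_1,m_2$ and $m_3'\in M_2$ above $m_1',m_2'$, so $m_3\vee m_3'\in M$ dominates both. That $\bigvee M=x\vee y$ is a general fact about suprema requiring \emph{no} distributivity: $x\vee y$ dominates every $m\vee m'$, while any upper bound $z$ of $M_1\vee M_2$ satisfies $m\le m\vee m'\le z$, whence $z\ge\bigvee M_1=x$ and likewise $z\ge y$, giving $z\ge x\vee y$.

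The crux is showing $\bigwedge N=x\vee y$, and this is the one place where infinite distributivity is indispensable. I would first reduce the infimum over the product to an iterated infimum, $\bigwedge_{(n,n')\in N_1\times N_2}(n\vee n')=\bigwedge_{n'\in N_2}\bigl(\bigwedge_{n\in N_1}(n\vee n')\bigr)$, which is the standard associativity of infima and is legitimate once the inner infima are known to exist. The inner infimum is computed by the meet-infinite distributive law: since $\bigwedge_{n\in N_1}n=x$ exists, we get $\bigwedge_{n\in N_1}(n'\vee n)=n'\vee x$. Applying the meet-infinite distributive law a second time to the family $\{x\vee n':n'\in N_2\}$ yields $\bigwedge_{n'\in N_2}(x\vee n')=x\vee\bigwedge_{n'\in N_2}n'=x\vee y$, as required.

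It then remains to verify the eventual-containment condition, which is routine. Given $(p,q)=(m\vee m',\,n\vee n')\in M\times N$, choose $\gamma_0$ with $m\le x_\gamma\le n$ for $\gamma\ge\gamma_0$ and $\omega_0$ with $m'\le y_\omega\le n'$ for $\omega\ge\omega_0$; then for $(\gamma,\omega)\ge(\gamma_0,\omega_0)$ in $\Gamma\times\Omega$, monotonicity of $\vee$ gives $m\vee m'\le x_\gamma\vee y_\omega\le n\vee n'$, i.e.\ $x_\gamma\vee y_\omega\in[p,q]$. The statement for the meet net is entirely dual: take $M:=M_1\wedge M_2$ and $N:=N_1\wedge N_2$, obtain $\bigwedge N=x\wedge y$ from the general infimum fact, and invoke the join-infinite distributive law twice to get $\bigvee M=x\wedge y$. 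I expect the main obstacle to be the bookkeeping in the crux step, namely justifying the passage to iterated infima and checking that each application of the meet-infinite distributive law has its existence hypothesis satisfied.
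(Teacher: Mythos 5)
Your proposal is correct and follows essentially the same route as the paper: both take $M:=M^x\vee M^y$ and $N:=N^x\vee N^y$ as the witnessing directed and filtered sets, verify eventual containment coordinatewise, and use infinite distributivity only to compute $\bigwedge N=x\vee y$ (the paper states this step in one line, while you spell out the reduction to iterated infima and the two applications of the meet-infinite distributive law). The extra detail you supply — including the observation that $\bigvee M=x\vee y$ needs no distributivity — is a faithful expansion of what the paper leaves implicit.
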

\begin{proof}
	There are directed sets $M^{x}$, $M^{y}$, and filtered sets $N^{x}$, $N^{y}$, such that for $(a^{x},b^{x}) \in M^{x} \times N^{x}$, and for $(a^{y},b^{y}) \in M^{y} \times N^{y}$, one can find $\gamma(a^{x},b^{x})$, $\omega(a^{y},b^{y}) $ such that $x_{\gamma} \in [a^{x},b^{x}]$ for $\gamma \geq \gamma(a^{x},b^{x})$ and $y_{\omega} \in [a^{y},b^{y}]$ for $\omega \geq \omega(a^{y},b^{y})$. Define $M:= M^{x} \vee M^{y}$ and $N:= N^{x} \vee N^{y}$. Then, $M$ is directed, $N$ is filtered, $\bigvee M = x \vee y$, and by infinite distributivity,  $\bigwedge N= x\vee y$. Furthermore, for $a^{x} \vee a^{y} \in M^{x} \vee M^{y}$ and $b^{x} \vee b^{y} \in N^{x} \vee N^{y}$, it holds that $x_{\gamma} \vee y_{\omega} \in [a^{x} \vee a^{y}\,,\, b^{x} \vee b^{y}]$ for $(\gamma, \omega) \geq \gamma(a^{x},b^{x}) \times \omega(a^{y},b^{y})$.  The other assertion can be proved similarly.  
\end{proof}

\begin{cor}\label{3.1}
	Let $L$ be an infinitely distributive lattice.  If $(x_{\gamma})_{\gamma \in \Gamma}\converges{{$\mathfrak{u}$O}} x$ and  $(y_{\omega})_{\omega \in \Omega} \converges{{$\mathfrak{u}$O}} y$, then 
	\[\bigl((x_{\gamma} \vee y_{\omega}):(\gamma , \omega) \in \Gamma \times \Omega\bigr)\]
	$\mathfrak u$O-converges to $x \vee y$, and dually,
	\[\bigl((x_{\gamma} \wedge y_{\omega}):(\gamma , \omega) \in \Gamma \times \Omega\bigr)\]
	$\mathfrak u$O-converges to $x \wedge y$.
\end{cor}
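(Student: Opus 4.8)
The plan is to reduce everything to Proposition \ref{3} by fixing the truncation parameters. Since an infinitely distributive lattice is in particular distributive, Remark \ref{remark}{\rm(v)} is available, so $\mathfrak{u}$O-convergence may be tested through \emph{either} of the truncated forms $(x_\gamma\wedge t)\vee s$ or $(x_\gamma\vee s)\wedge t$. I would treat the supremum assertion with the first form and the infimum assertion with the second, exploiting the symmetry.

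For the supremum, fix $s\le t$ in $L$. By hypothesis $(x_\gamma\wedge t)\vee s\converges{O}(x\wedge t)\vee s$ and $(y_\omega\wedge t)\vee s\converges{O}(y\wedge t)\vee s$. Proposition \ref{3} then yields that the join net, indexed by $\Gamma\times\Omega$, O-converges to the join of the two limits. The remaining work is purely algebraic: using $s\vee s=s$ together with the distributive identity $(a\wedge t)\vee(b\wedge t)=(a\vee b)\wedge t$, the general term collapses to $((x_\gamma\vee y_\omega)\wedge t)\vee s$ and the limit to $((x\vee y)\wedge t)\vee s$. As $s\le t$ was arbitrary, this is exactly the assertion $x_\gamma\vee y_\omega\converges{$\mathfrak{u}$O}x\vee y$.

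For the infimum I would run the dual argument, now testing $\mathfrak{u}$O-convergence through the form $(x_\gamma\vee s)\wedge t$ furnished by Remark \ref{remark}{\rm(v)}. Fixing $s,t$, the hypotheses give $(x_\gamma\vee s)\wedge t\converges{O}(x\vee s)\wedge t$ and likewise for $y$; the meet part of Proposition \ref{3} gives O-convergence of the meet net; and the dual distributive identity $(a\vee s)\wedge(b\vee s)=(a\wedge b)\vee s$ (absorbing the redundant $t\wedge t=t$) reduces the general term to $((x_\gamma\wedge y_\omega)\vee s)\wedge t$ and the limit to $((x\wedge y)\vee s)\wedge t$. This yields $x_\gamma\wedge y_\omega\converges{$\mathfrak{u}$O}x\wedge y$.

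The only delicate points are bookkeeping rather than conceptual. One must check that the index set $\Gamma\times\Omega$ produced by Proposition \ref{3} coincides with the index set of the target net, and that the two distributive collapses are valid; both follow from ordinary finite distributivity, with $t\wedge t=t$ and $s\vee s=s$ swallowing the duplicated truncations. I expect no genuine obstacle: the entire content lies in the observation that the truncated nets $(x_\gamma\wedge t)\vee s$ are themselves O-convergent, which is precisely what $\mathfrak{u}$O-convergence of $(x_\gamma)$ grants, so that Proposition \ref{3} applies verbatim and the corollary becomes a short computation.
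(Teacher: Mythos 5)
Your proposal is correct and follows essentially the same route as the paper: apply Proposition \ref{3} to the O-convergent truncated nets and collapse the resulting expressions via finite distributivity. The only cosmetic difference is that the paper uses the single form $(x_\gamma\vee s)\wedge t$ for both the join and the meet, whereas you switch between the two equivalent truncated forms; this changes nothing of substance.
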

\begin{proof}
	For any $s,t\in L$ we have
 \begin{align*}
     (x_\gamma\vee s)\wedge t\,&\converges{O}\,(x\vee s)\wedge t,\\ 
     (y_\omega\vee s)\wedge t\,&\converges{O}\,(y\vee s)\wedge t,
     \end{align*}
and therefore, by Proposition \ref{3}, 
\[((x_\gamma\vee s)\wedge t)\vee((y_\omega\vee s)\wedge t)\,\converges{O}\,((x\vee s)\wedge t)\vee((y\vee s)\wedge t),\]
and
\[((x_\gamma\vee s)\wedge t)\wedge((y_\omega\vee s)\wedge t)\,\converges{O}\,((x\vee s)\wedge t)\wedge((y\vee s)\wedge t).\]
Hence,
\[(x_\gamma\vee y_\omega\vee s)\wedge t\,\converges{O}\,(x\vee y\vee s)\wedge t,\]
and
\[((x_\gamma\wedge y_\omega)\vee s)\wedge t\,\converges{O}\,((x\wedge y)\vee s)\wedge t.\]
\end{proof}

We now show that for a distributive lattice $L$, the order continuity of $\mathfrak{u}$O-convergence is equivalent to $L$ being infinitely distributive.

\vskip 0.25 in

\begin{thm}\label{4}
	A distributive lattice $L$ is infinitely distributive if and only if  $\mathfrak{u}O$-convergence is order continuous.
\end{thm}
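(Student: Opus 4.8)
The statement to establish is that, for a distributive lattice $L$, the implication ``$x_\gamma\converges{O}x \Rightarrow x_\gamma\converges{$\mathfrak{u}O$}x$'' holds for every net if and only if $L$ is infinitely distributive. The plan is to prove the two implications separately, handling sufficiency by a direct transport of the witnessing directed and filtered sets, and necessity by contraposition, producing an explicit monotone counterexample net.

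For the forward direction, assume $L$ is infinitely distributive and suppose $x_\gamma\converges{O}x$, witnessed by a directed $M$ and a filtered $N$ with $\bigvee M=\bigwedge N=x$ and with $x_\gamma$ eventually in $[m,n]$ for each $(m,n)\in M\times N$. Fix $s\le t$ and put $f:=f_{s,t}$, where $f_{s,t}(y)=(y\wedge t)\vee s$. By Proposition \ref{a}, $f$ is a lattice homomorphism, hence order-preserving, so $f(M)$ is directed and $f(N)$ is filtered. I would then verify $\bigvee f(M)=\bigwedge f(N)=f(x)=(x\wedge t)\vee s$: the identity $\bigvee_m\big((m\wedge t)\vee s\big)=\big(\bigvee_m(m\wedge t)\big)\vee s$ holds in any lattice once the inner supremum exists, while $\bigvee_m(m\wedge t)=x\wedge t$ is precisely the join-infinite distributive law applied to $\bigvee M=x$; dually, $\bigwedge_n\big((n\wedge t)\vee s\big)=\big(\bigwedge_n(n\wedge t)\big)\vee s=(x\wedge t)\vee s$ invokes the meet-infinite distributive law. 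Since $f$ is monotone, $x_\gamma\in[m,n]$ eventually forces $f(x_\gamma)\in[f(m),f(n)]$ eventually, whence $(x_\gamma\wedge t)\vee s\converges{O}(x\wedge t)\vee s$. As $s\le t$ were arbitrary, $x_\gamma\converges{$\mathfrak{u}O$}x$, i.e. $\mathfrak{u}O$-convergence is order continuous.

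For the converse I would argue contrapositively. If $L$ is not infinitely distributive, one of the two infinite distributive laws fails; say first the join-infinite law fails, so there are $w\in L$ and $\{x_\alpha:\alpha\in\mathcal A\}$ with $\bigvee_\alpha x_\alpha=v$ existing but $w\wedge v\neq\bigvee_\alpha(w\wedge x_\alpha)$ (the right-hand supremum either differing from $w\wedge v$ or failing to exist). Let $M$ be the directed set of finite joins of the $x_\alpha$; then the net $(x_\gamma)_{\gamma\in M}$ with $x_\gamma=\gamma$ increases to $v$, so $x_\gamma\converges{O}v$ by Remark \ref{remark}{\rm(i)}. Choosing $t:=w$ and $s:=w\wedge x_{\alpha_0}$ for a fixed index $\alpha_0$, the net $(x_\gamma\wedge w)\vee s$ is increasing, equals $x_\gamma\wedge w$ eventually, and—using finite distributivity, which expands each $\gamma\wedge w$ into a finite join of the $x_\alpha\wedge w$—its supremum coincides with $\bigvee_\alpha(w\wedge x_\alpha)$, in the sense that one exists iff the other does and they are then equal. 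On the other hand $(v\wedge w)\vee s=v\wedge w$, since $s\le v\wedge w$. Because an increasing net O-converges if and only if its supremum exists, in which case the supremum is the limit (Remark \ref{remark}{\rm(i)} and {\rm(iii)}), the net $(x_\gamma\wedge w)\vee s$ either fails to O-converge or O-converges to $\bigvee_\alpha(w\wedge x_\alpha)\neq v\wedge w$; in both cases it does not O-converge to $(v\wedge w)\vee s$, so $x_\gamma$ is not $\mathfrak{u}O$-convergent to $v$, and order continuity fails. The case where the meet-infinite law fails is handled dually by a decreasing net, using that in a distributive lattice both $\mathfrak{u}O$-convergence (Remark \ref{remark}{\rm(v)}) and O-convergence are self-dual.

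I expect the main obstacle to lie in the existence bookkeeping of the converse: the failure of infinite distributivity may surface either as an unequal supremum or as a non-existent one, so the argument must be phrased to extract a genuine failure of O-convergence in either case. The constant $s=w\wedge x_{\alpha_0}$ is chosen precisely so that $(v\wedge w)\vee s$ collapses to $v\wedge w$ without assuming a bottom element in $L$, and so that the supremum of the transformed net reduces cleanly, via finite distributivity, to $\bigvee_\alpha(w\wedge x_\alpha)$. The structural fact I would isolate first, since it underpins the whole converse, is that an increasing net O-converges exactly when its supremum exists and then converges to it.
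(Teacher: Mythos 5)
Your proof is correct and follows essentially the same route as the paper: the forward direction unpacks by hand the computation that the paper delegates to Proposition \ref{3} (applied with constant nets), and your contrapositive for the converse uses exactly the paper's construction --- the increasing net of finite joins together with the choice $t=w$, $s=w\wedge x_{\alpha_0}$, and the fact that an increasing net O-converges precisely to its supremum. The only difference is presentational (contrapositive versus direct derivation of the infinite distributive law), and your explicit handling of the case where $\bigvee_\alpha(w\wedge x_\alpha)$ fails to exist is a welcome point of care.
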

\begin{proof}
	If $L$ is infinitely distributive, then $\mathfrak{u}O$-convergence is order continuous by Proposition \ref{3}. Conversely, assume that $\mathfrak{u}O$-convergence is order continuous. Let $A \subseteq L$  and $y\in L$. We want to show that if $\bigvee A=u$ (resp. $\bigwedge A=v$) then $\bigvee \{a \wedge y : a \in A\} = u \wedge y$ (resp. $\bigwedge \{a\vee y:a\in A\} = v\vee y$). 
 Let $\aaa=\{B \subseteq A : \vert B \vert < \aleph_0\}$ and for every $B\in\aaa$ let $a_{B} = \bigvee B$, to get an increasing net $\{a_{B} : B \in \aaa\}$ in $L$ with supremum $u$. 
 Then $(a_B)_{B\in\aaa}$ is order convergent to $u$, and the assumed order continuity of $\mathfrak{u}O$-convergence implies that $a_{B} \converges{{$\mathfrak{u}\mbox{O}$}} u$. 
 
 Take $t = y$ and $s = a_0 \wedge y$, where $a_0$ is an arbitrary point of $A$.  Then
 \[(a_{B} \wedge y) \vee (a_0\wedge y) \converges{O} (u \wedge y)\vee(a_0\wedge y)=u\wedge y.\] 
 
 Observe that the net 
 \[\{(a_{B} \wedge y) \vee (a_0\wedge y):B\in\aaa\}\]
 is increasing, and so Proposition \ref{38} implies that 
 \[\bigvee\{(a_{B} \wedge y) \vee (a_0\wedge y):B\in\aaa\}=u\wedge y.\]
 Since $(a_{B} \wedge y) \ge (a_0\wedge y)$ when $a_0\in B$, it follows that 
 \[\bigvee\{(a_{B} \wedge y) \vee (a_0\wedge y):B\in\aaa\}=\bigvee\{(a_{B} \wedge y):B\in\aaa\}.\]
 Finally, if $B=\{a_1,\dots,a_n\}$, then $a_B\wedge y=\bigvee_{i\le n}(a_i\wedge y)$, because $L$ is distributive.  Thus 
 \[\bigvee \{a \wedge y : a \in A\}=\bigvee\{(a_{B} \wedge y):B\in\aaa\}=u\wedge y.\]
 
A similar argument can be made to prove the dual.
\end{proof}

\begin{prop}\label{37}
Let $L$ be an infinitely distributive lattice. If $(x_{\gamma})_{\gamma \in \Gamma}$ is an order bounded net, then $x_{\gamma} \converges{{$\mathfrak{u}$O}} x$ if and only if $x_{\gamma} \converges{O} x$.
\end{prop}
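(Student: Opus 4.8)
The plan is to dispatch the two implications separately, with order boundedness entering in only one of them.

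For the implication $x_\gamma\converges{O}x\Rightarrow x_\gamma\converges{$\mathfrak{u}$O}x$, no boundedness is required. Since $L$ is infinitely distributive, Theorem~\ref{4} asserts precisely that $\mathfrak{u}O$-convergence is order continuous, i.e. every $O$-convergent net is $\mathfrak{u}O$-convergent to the same limit; this settles one direction at once. Alternatively, one may apply Proposition~\ref{3} twice to the constant nets at $t$ and then at $s$, obtaining $x_\gamma\wedge t\converges{O}x\wedge t$ and hence $(x_\gamma\wedge t)\vee s\converges{O}(x\wedge t)\vee s$ for all $s\le t$.

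For the converse, suppose $(x_\gamma)_{\gamma\in\Gamma}$ is order bounded, say $p\le x_\gamma\le q$ for every $\gamma$ (so in particular $p\le q$), and assume $x_\gamma\converges{$\mathfrak{u}$O}x$. The idea is to feed the bounds $p,q$ into the defining condition of $\mathfrak{u}O$-convergence. Applying Definition~\ref{d1}{\rm(ii)} with $s=p$ and $t=q$ yields $(x_\gamma\wedge q)\vee p\converges{O}(x\wedge q)\vee p$. Because $p\le x_\gamma\le q$, we have $(x_\gamma\wedge q)\vee p=x_\gamma$ for every $\gamma$; and since $O$-convergence depends only on eventual containment in order intervals, a net that is eventually equal to an $O$-convergent net is itself $O$-convergent to the same limit. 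Hence $x_\gamma\converges{O}(x\wedge q)\vee p$.

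It then remains to identify the limit with $x$. The key point is that the $\mathfrak{u}O$-limit inherits the bounds of the net: since eventually $x_\gamma\le q$, Proposition~\ref{38}{\rm(i)} gives $x\le q$, and dually $x\ge p$; thus $x\in[p,q]$, whence $(x\wedge q)\vee p=x$ and we conclude $x_\gamma\converges{O}x$. The only delicate step is this localization $(x\wedge q)\vee p=x$, which rests entirely on Proposition~\ref{38}{\rm(i)}; everything else is the observation that, once the net is trapped in $[p,q]$, the truncation $(\,\cdot\wedge q)\vee p$ acts as the identity and the $\mathfrak{u}O$-condition collapses to plain $O$-convergence. This statement is thus a refinement of Remark~\ref{remark}{\rm(iv)} from globally bounded lattices to order bounded nets in infinitely distributive lattices.
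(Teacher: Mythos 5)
Your proof is correct, and its overall skeleton matches the paper's: one direction is immediate from Theorem~\ref{4}, and for the converse both arguments reduce to showing $x\in[p,q]$ and then observing that the truncation $(\,\cdot\wedge q)\vee p$ with $s=p$, $t=q$ fixes the whole net, so the $\mathfrak{u}$O-condition at that single pair $(s,t)$ already gives $x_\gamma\converges{O}(x\wedge q)\vee p=x$. Where you diverge is in how the containment $x\in[p,q]$ is obtained. The paper does not invoke Proposition~\ref{38}{\rm(i)}; instead it runs a double-limit argument internal to the net, noting that $(x_\alpha\wedge x_\gamma)\vee x_\beta\in[a,b]$ for all indices, passing to the limit in $\alpha$ to get $(x\wedge x_\gamma)\vee x_\beta\in[a,b]$ (the interval being O-closed), and then ``repeating this argument'' to conclude $x\in[a,b]$. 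Your route through Proposition~\ref{38}{\rm(i)} and its dual is cleaner: that proposition is proved earlier in the paper by exactly the kind of truncation-plus-Remark~\ref{remark}{\rm(iii)} argument the paper's proof re-derives here in a terser form, so citing it both shortens the proof and removes the slightly vague ``repeating'' step. Nothing is lost: Proposition~\ref{38}{\rm(i)} does not depend on Proposition~\ref{37}, so there is no circularity, and your closing remark correctly situates the statement relative to Remark~\ref{remark}{\rm(iv)}.
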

\begin{proof}
	That $x_{\gamma} \converges{O} x$ implies $x_{\gamma} \converges{{$\mathfrak{u}$O}} x$ follows by Theorem \ref{4}. On the  other hand assume that there exists $a,b \in L$ such that $x_{\gamma} \in [a,b]$ for every $\gamma \in \Gamma$ and $x_{\gamma} \converges{{$\mathfrak{u}$O}} x$. Then $(x_{\alpha} \wedge x_{\gamma}) \vee x_{\beta} \in [a,b]$ for every $\alpha$, $\beta$ and $\gamma$ in $\Gamma$, and thus $(x \wedge x_{\gamma}) \vee x_{\beta} \in [a,b]$. Repeating this argument it follows that $x \in [a,b]$. Hence, $x_{\gamma} = (x_{\gamma} \wedge b) \vee a \converges{O} (x \wedge b) \vee a = x$.
\end{proof}

\begin{cor}
  In a bounded, infinitely distributive lattice, O-convergence and $\mathfrak{u}$O-convergence are the same.
\end{cor}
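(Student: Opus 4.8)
The plan is to obtain this as an immediate corollary of the results already established, the sole new ingredient being the trivial remark that in a bounded lattice every net is automatically order bounded. Concretely, if $L$ has least element $\bot$ and greatest element $\top$, then any net $(x_\gamma)_{\gamma\in\Gamma}$ in $L$ satisfies $x_\gamma\in[\bot,\top]$ for all $\gamma$, so the order-boundedness hypothesis of Proposition \ref{37} is fulfilled by \emph{every} net in $L$.

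Granting this, I would simply invoke Proposition \ref{37}: for an arbitrary net $(x_\gamma)_{\gamma\in\Gamma}$ and point $x$ in the bounded, infinitely distributive lattice $L$, that proposition yields $x_\gamma\converges{{$\mathfrak{u}$O}}x$ if and only if $x_\gamma\converges{O}x$. Since no net escapes the order-boundedness requirement, this equivalence is unconditional, and the two notions of convergence coincide on $L$, which is exactly the claim.

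For transparency I would also note that the two implications can be read off separately from the two hypotheses. The direction $x_\gamma\converges{{$\mathfrak{u}$O}}x\Rightarrow x_\gamma\converges{O}x$ is Remark \ref{remark}{\rm(iv)}, which needs only that $L$ is bounded; the reverse direction $x_\gamma\converges{O}x\Rightarrow x_\gamma\converges{{$\mathfrak{u}$O}}x$ is precisely the order continuity of $\mathfrak{u}$O-convergence, supplied by Theorem \ref{4} under infinite distributivity (which entails distributivity, so the hypothesis of that theorem is met). Combining the two gives the equivalence along a second, equally short route.

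I anticipate no genuine obstacle: the substantive work lives in Proposition \ref{37}, Remark \ref{remark}{\rm(iv)}, and Theorem \ref{4}. The only thing worth checking — and it is immediate — is that the boundedness of $L$ promotes the conditional equivalence of Proposition \ref{37}, stated for order bounded nets, to an unconditional one, since $\bot$ and $\top$ bound every net.
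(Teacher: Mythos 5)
Your proof is correct and matches the paper's intended argument: the corollary is stated immediately after Proposition \ref{37} with no separate proof, precisely because boundedness of $L$ makes every net order bounded, so that proposition applies unconditionally. Your alternative reading via Remark \ref{remark}(iv) and Theorem \ref{4} is also sound, but it is the same underlying content rather than a genuinely different route.
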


\section{The O-closure and $\mathfrak{u}$O-closure of sublattices and ideals}

For a subset $ X$ of a lattice $L$ let
\[ X^{\sss{O}}_1:=\{x\in L:\mbox{there exists a net in $ X$ that O-converges to $x$}\}\,.\]
$ X^{\sss{O}}_1$ is called the \emph{1-O-adherence of $ X$}. 

We can define the  \textit{$\lambda$-O-adherence}  $ X^{\sss{{O}}}_\lambda$ recursively as follows. Set $ X^{\sss{O}}_0:= X$ and for every ordinal number $\lambda> 0$ define the  $\lambda$-O-adherence  $ X^{\sss{{O}}}_\lambda$ by:
\[ X^{\sss{{O}}}_\lambda:=\left(\bigcup_{\beta<\gamma} X^{\sss{{O}}}_\beta\right)^{\sss{O}}_1.\]

It is possible to define the \textit{$\lambda$-$\mathfrak u$O-adherence} $ X^{\sss{\mathfrak{u}{O}}}_\lambda$ for every ordinal $\lambda> 0$ by recursion, in the same way that we defined the $\lambda$-O-adherence  $ X^{\sss{{O}}}_\lambda$, i.e.\[ X^{\sss{\mathfrak{u}{O}}}_\lambda:=\left(\bigcup_{\beta<\lambda} X^{\sss{\mathfrak{u}{O}}}_\beta\right)^{\sss{\mathfrak{u}O}}_1.\]

The set $ X$ is said to be \textit{O-closed} (resp. \textit{$\mathfrak{u}$O-closed}) if $ X= X^{\sss{O}}_1$ (resp. $ X= X^{\sss{\mathfrak{u}O}}_1$).  The set of all O-closed subsets of $ L$ forms a topology on $ L$, called the \textit{order topology}. The same can be said for the $\mathfrak u$O-closed sets and one can speak of the\textit{ $\mathfrak u$O-topology} as the topology given rise by the $\mathfrak{u}$O-closed subsets of $ L$. The O-closure of $ X\subseteq L$ is the smallest O-closed subset of $ L$ that contains $ X$, i.e. the O-closure is the topological closure w.r.t. the order topology.  Note that this will generally be larger than $ X^{\sss{O}}_1$.  
Similarly, the $\mathfrak u$O-closure is the smallest $\mathfrak u$O-closed subset of $ L$ that contains $ X$.

\begin{remark}\label{rem2}
\begin{enumerate}[{\rm(i)}]
    \item  If $ X$ is a subset of an infinitely distributive lattice $ L$, then $ X^{\sss{O}}_1\subseteq X^{\sss{\mathfrak{u}O}}_1$, by Theorem~\ref{4}. \\
     \item In a lattice, the cuts $[a,\rightarrow)$ and $(\leftarrow,a]$ are O-closed (by Remark \ref{remark}~{\rm(iii)}), and  $\mathfrak{u}$O-closed (by Proposition \ref{38}~{\rm(i)}).    \\
  \item There must exist an ordinal $\lambda$ such that $ X^{\sss{O}}_\lambda= X^{\sss{O}}_{\lambda+1}$. To see this, it is enough to observe, for example, that $ X^{\sss{O}}_\kappa =  X^{\sss{O}}_{\kappa+1}$ when $\kappa$ is equal to any cardinality greater than the cardinality of $ L$.  The same holds for $\mathfrak{u}$O-adherence.  If we let 
    \begin{align*}
    \alpha:=&\min\{\lambda\ge 0: X^{\sss{O}}_\lambda= X^{\sss{O}}_{\lambda+1}\}\\
    \beta:=&\min\{\lambda\ge 0: X^{\sss{\mathfrak{u}O}}_\lambda= X^{\sss{\mathfrak{u}O}}_{\lambda+1}\}\,,
    \end{align*}
    then  $ X^{\sss{O}}_\alpha$ coincides with the  the O-closure (=topological closure w.r.t. the order topology) of $ X$ and $ X^{\sss{\mathfrak{u}O}}_\beta$ with the $\mathfrak u$O-closure (= topological closure w.r.t. the $\mathfrak{u}$O-topology) of $ X$.  When $ L$ is infinitely distributive, we note that $ X^{\sss{O}}_\alpha\subseteq  X^{\sss{\mathfrak{u}O}}_\beta$ (by Theorem \ref{4}).
\end{enumerate}
   \end{remark}

\begin{prop}\label{10.1}
Let $Y$ be a sublattice of an infinitely distributive lattice $ L$.  For every ordinal $\lambda\ge 0$ the sets $Y^{\sss{O}}_\lambda$ and $Y^{\sss{\mathfrak u}O}_\lambda$ are sublattices of $ L$.
\end{prop}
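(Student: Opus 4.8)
The plan is to argue by transfinite induction on $\lambda$, after isolating the single essential fact that the $1$-O-adherence (resp. $1$-$\mathfrak{u}$O-adherence) of a sublattice is again a sublattice. I would treat the O-case and the $\mathfrak{u}$O-case in parallel, since they differ only in replacing Proposition \ref{3} by Corollary \ref{3.1}.

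First, the key lemma: if $Z$ is a sublattice of $L$, then $Z^{\sss{O}}_1$ is a sublattice. Given $x,y\in Z^{\sss{O}}_1$, choose nets $(x_\gamma)_{\gamma\in\Gamma}$ and $(y_\omega)_{\omega\in\Omega}$ in $Z$ with $x_\gamma\converges{O}x$ and $y_\omega\converges{O}y$. By Proposition \ref{3}, the net $(x_\gamma\vee y_\omega)_{(\gamma,\omega)\in\Gamma\times\Omega}$ O-converges to $x\vee y$, and $(x_\gamma\wedge y_\omega)_{(\gamma,\omega)}$ O-converges to $x\wedge y$. Since $Z$ is a sublattice, every entry $x_\gamma\vee y_\omega$ and $x_\gamma\wedge y_\omega$ lies in $Z$; hence $x\vee y,\,x\wedge y\in Z^{\sss{O}}_1$. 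The identical argument with Corollary \ref{3.1} in place of Proposition \ref{3} shows that $Z^{\sss{\mathfrak{u}O}}_1$ is a sublattice.

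Next, the induction. The base case $\lambda=0$ is the hypothesis that $Y=Y^{\sss{O}}_0$ is a sublattice. For the inductive step I first record that the adherences are nested: since $X\subseteq X^{\sss{O}}_1$ for every set $X$ (constant nets O-converge), one gets $Y^{\sss{O}}_\beta\subseteq Y^{\sss{O}}_\gamma$ whenever $\beta\le\gamma$. Assuming each $Y^{\sss{O}}_\beta$ with $\beta<\lambda$ is a sublattice, the union $Z:=\bigcup_{\beta<\lambda}Y^{\sss{O}}_\beta$ is an increasing union of sublattices and is therefore itself a sublattice: any two of its elements lie in a common $Y^{\sss{O}}_\beta$ (take the larger of their indices and use nesting), whose joins and meets stay in that $Y^{\sss{O}}_\beta\subseteq Z$. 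Applying the key lemma to $Z$ gives that $Y^{\sss{O}}_\lambda=Z^{\sss{O}}_1$ is a sublattice, closing the induction. The $\mathfrak{u}$O-statement follows verbatim, using Corollary \ref{3.1}.

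There is no serious obstacle here; the content is entirely carried by Proposition \ref{3} and Corollary \ref{3.1}, which guarantee that the coordinatewise join and meet of two O- (resp. $\mathfrak{u}$O-) convergent nets again converge to the join and meet of the limits. The only points requiring (routine) care are the nesting of the adherences—needed so that the union over $\beta<\lambda$ is directed and hence a sublattice—and the uniform treatment of successor and limit stages, both of which are subsumed by the recursive definition $Y^{\sss{O}}_\lambda=\bigl(\bigcup_{\beta<\lambda}Y^{\sss{O}}_\beta\bigr)^{\sss{O}}_1$.
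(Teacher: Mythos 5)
Your proof is correct and follows essentially the same route as the paper: transfinite induction, with Proposition \ref{3} (resp.\ Corollary \ref{3.1}) applied to the product net $(x_\sigma\vee y_\omega)$ taken inside the union $\bigcup_{\beta<\lambda}Y^{\sss{O}}_\beta$. Your explicit observation that the adherences are nested, so that this union is a directed union of sublattices and hence itself a sublattice, is a small point the paper leaves implicit.
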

\begin{proof}
To prove this result, we use transfinite induction.  The assertion is trivially satisfied when $\lambda=0$.  For every $\lambda\ge 1$ define
\[A_\lambda:=\bigcup_{\beta<\lambda} Y^{\sss{O}}_\beta\qquad\text{and}\qquad B_\lambda:=\bigcup_{ \beta<\lambda}Y^{\sss{{\mathfrak u}O}}_\beta.\]

We shall prove only the statement regarding $O$-adherence; the proof for $\mathfrak{u}O$-adherence follows the same argument (applying Corollary \ref{3.1} instead of Proposition \ref{3}). 

For $x,\ y \in Y_{\lambda}^{\sss{O}}$  there exist two nets $(x_{\sigma})_{\sigma \in \Sigma}$ and $(y_\omega)_{\omega \in \Omega}$ in $A_\lambda$ such that $x_{\sigma} \converges{O} x$ and $y_{\omega} \converges{O} y$.  By the induction hypothesis, every $Y^{\sss{O}}_\beta$ $(0\le \beta<\lambda)$ is a sublattice of $ L$, and therefore so is $A_\lambda$.  Hence, $(x_{\sigma} \vee y_{\omega})_{(\sigma ,\omega) \in \Sigma \times \Omega}$ is a net in $A_\lambda$. Proposition \ref{3} implies $x_{\sigma} \vee y_{\omega} \converges{O} x \vee y$ and so  $x \vee y \in (A_\lambda)^{\sss{O}}_1=Y_{\lambda}^{\sss{O}}$.  Similarly, it can be shown that $x \wedge y \in Y_{\lambda}^{\sss{O}}$.
    \end{proof}

\begin{prop}\label{propInclusion}
Let $Y$ be a sublattice of an infinitely distributive lattice $ L$.  For every ordinal $\lambda\ge 0$ there exists an ordinal $f(\lambda)\ge \lambda$ such that 
\begin{equation}\label{e1}
    Y^{\sss{O}}_\lambda\subseteq Y^{\sss{\mathfrak u}O}_\lambda\subseteq Y^{\sss{O}}_{f(\lambda)}.
\end{equation}      
\end{prop}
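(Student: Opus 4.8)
The plan is to prove the two inclusions separately, viewing the right‑hand one as the substantial part and isolating its single‑step version as a lemma.

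\emph{The first inclusion.} I would obtain $Y^{\sss{O}}_\lambda\subseteq Y^{\sss{\mathfrak{u}O}}_\lambda$ by transfinite induction on $\lambda$, the case $\lambda=0$ being an equality. Assuming $Y^{\sss{O}}_\beta\subseteq Y^{\sss{\mathfrak{u}O}}_\beta$ for all $\beta<\lambda$, set $A_\lambda:=\bigcup_{\beta<\lambda}Y^{\sss{O}}_\beta$ and $B_\lambda:=\bigcup_{\beta<\lambda}Y^{\sss{\mathfrak{u}O}}_\beta$, so $A_\lambda\subseteq B_\lambda$. Since the operation $X\mapsto X^{\sss{O}}_1$ is clearly monotone in its argument and $X^{\sss{O}}_1\subseteq X^{\sss{\mathfrak{u}O}}_1$ for every $X$ by Remark~\ref{rem2}\,{\rm(i)}, I get $Y^{\sss{O}}_\lambda=(A_\lambda)^{\sss{O}}_1\subseteq(B_\lambda)^{\sss{O}}_1\subseteq(B_\lambda)^{\sss{\mathfrak{u}O}}_1=Y^{\sss{\mathfrak{u}O}}_\lambda$.

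\emph{Reduction of the second inclusion.} The engine is the single‑step claim: \emph{for every sublattice $X$ of $L$ one has $X^{\sss{\mathfrak{u}O}}_1\subseteq X^{\sss{O}}_2$}. Granting this, I would define $f$ by transfinite recursion, $f(0):=0$ and $f(\lambda):=\bigl(\sup_{\beta<\lambda}f(\beta)\bigr)+2$, and check $f(\lambda)\ge\lambda$ by a routine induction. For the inductive step, note that $B_\lambda=\bigcup_{\beta<\lambda}Y^{\sss{\mathfrak{u}O}}_\beta$ is a sublattice, being an increasing union of the sublattices furnished by Proposition~\ref{10.1}; hence the single‑step claim applies to it and gives $Y^{\sss{\mathfrak{u}O}}_\lambda=(B_\lambda)^{\sss{\mathfrak{u}O}}_1\subseteq(B_\lambda)^{\sss{O}}_2$. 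By the induction hypothesis and monotonicity of the adherences in the index, $B_\lambda\subseteq\bigcup_{\beta<\lambda}Y^{\sss{O}}_{f(\beta)}\subseteq Y^{\sss{O}}_{g}$ with $g:=\sup_{\beta<\lambda}f(\beta)$. Finally $(Y^{\sss{O}}_g)^{\sss{O}}_2=Y^{\sss{O}}_{g+2}=Y^{\sss{O}}_{f(\lambda)}$, using the successor identity $Y^{\sss{O}}_{g+1}=(Y^{\sss{O}}_g)^{\sss{O}}_1$; combining these inclusions yields $Y^{\sss{\mathfrak{u}O}}_\lambda\subseteq Y^{\sss{O}}_{f(\lambda)}$.

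\emph{The single‑step claim (the heart).} Let $X$ be a sublattice and $x\in X^{\sss{\mathfrak{u}O}}_1$, witnessed by a net $(y_\gamma)_\gamma$ in $X$ with $y_\gamma\converges{$\mathfrak{u}$O}x$. First, for any $s\le t$ in $X$ the truncated net $(y_\gamma\wedge t)\vee s$ lies in $X$ and O‑converges to $(x\wedge t)\vee s$, so $(x\wedge t)\vee s\in X^{\sss{O}}_1$. Now fix any $r\in X$ and put $p_\gamma:=y_\gamma\vee r$ and $q_\gamma:=y_\gamma\wedge r$; both are in $X$ with $q_\gamma\le p_\gamma$, so the previous sentence gives $(x\wedge p_\gamma)\vee q_\gamma\in X^{\sss{O}}_1$ for every $\gamma$. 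The crucial point is the distributive identity
\[(x\wedge p_\gamma)\vee q_\gamma=(y_\gamma\wedge(x\vee r))\vee(x\wedge r),\]
whose right‑hand side is exactly $(y_\gamma\wedge t)\vee s$ for the admissible pair $s:=x\wedge r\le t:=x\vee r$. Hence the $\mathfrak{u}O$‑convergence of $(y_\gamma)$ forces the net $\bigl((x\wedge p_\gamma)\vee q_\gamma\bigr)_\gamma$ to O‑converge to $(x\wedge(x\vee r))\vee(x\wedge r)=x$. Thus $x$ is the O‑limit of a net in $X^{\sss{O}}_1$, i.e. $x\in(X^{\sss{O}}_1)^{\sss{O}}_1=X^{\sss{O}}_2$, proving the claim.

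\emph{Main obstacle.} The only non‑mechanical point is the single‑step claim, and within it the discovery that the pair $s=x\wedge r$, $t=x\vee r$ truncates the net to something that both O‑converges to $x$ and, after the distributive rearrangement, is a join of the \emph{$X$‑elements} $p_\gamma,q_\gamma$ truncated by $x$, hence already captured in $X^{\sss{O}}_1$. Once this is seen, verifying the identity and that $(x\wedge t)\vee s=x$ for this pair is routine. Conceptually, one O‑step is spent recording the truncations $(x\wedge p_\gamma)\vee q_\gamma$ inside $X^{\sss{O}}_1$ and a second O‑step is spent passing to their limit $x$, which is exactly why the bound is $X^{\sss{O}}_2$ and, after the recursion, $f(\lambda)=\bigl(\sup_{\beta<\lambda}f(\beta)\bigr)+2$; the remaining work (monotonicity of the adherences in both arguments and the successor identity) is straightforward bookkeeping.
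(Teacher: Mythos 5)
Your proposal is correct, and its skeleton --- the first inclusion via monotonicity of the $1$-adherence operators plus Remark \ref{rem2}(i), and a transfinite recursion with $f(\lambda)=\bigl(\sup_{\beta<\lambda}f(\beta)\bigr)+2$ --- is the same as the paper's; in fact both arrive at the identical formula for $f$. Where you genuinely differ is in the engine. The paper does not isolate a single-step lemma: for a net $(x_\sigma)$ in the sublattice $A_\omega$ with $x_\sigma\converges{$\mathfrak{u}$O}x$ it climbs three rungs, showing first that $(x\wedge x_{\sigma'})\vee x_{\sigma''}\in Y^{\sss{O}}_{\omega}$ (truncating by pairs of net elements), then that $x\vee x_{\sigma''}\in Y^{\sss{O}}_{\omega+1}$ (truncating by $t=x$), and finally that $x\in Y^{\sss{O}}_{\omega+2}$; the count of ``$+2$'' comes out of the bookkeeping $(A_\omega)^{\sss{O}}_1=Y^{\sss{O}}_{\omega}$ absorbing the first of the three limit passages. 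You instead prove the clean statement $X^{\sss{\mathfrak{u}O}}_1\subseteq X^{\sss{O}}_2$ for an arbitrary sublattice $X$ in two rungs, using the median identity $(x\wedge(y_\gamma\vee r))\vee(y_\gamma\wedge r)=(y_\gamma\wedge(x\vee r))\vee(x\wedge r)$ (both sides equal $(x\wedge y_\gamma)\vee(x\wedge r)\vee(y_\gamma\wedge r)$ by distributivity): read one way the terms visibly lie in $X^{\sss{O}}_1$, read the other way they are the truncation of $(y_\gamma)$ by the admissible pair $s=x\wedge r\le t=x\vee r$ and hence O-converge to $x$. This is a more self-contained and reusable lemma, and it makes the two extra adherence steps conceptually transparent. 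In a final write-up you should make explicit the routine facts you use tacitly: that the adherences increase with $\lambda$ (so $B_\lambda$ is a directed union of sublattices, hence a sublattice, and $\bigcup_{\beta<\lambda}Y^{\sss{O}}_{f(\beta)}\subseteq Y^{\sss{O}}_{g}$) and the successor identity $Y^{\sss{O}}_{g+1}=(Y^{\sss{O}}_{g})^{\sss{O}}_1$, both of which follow from $X\subseteq X^{\sss{O}}_1$ via constant nets.
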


\begin{proof}
We shall use the same notation introduced in the proof of Proposition \ref{10.1}.  The function $f$ shall be defined recursively.
Let $f(0):=1$.  Clearly (\ref{e1}) is satisfied.  Suppose that $\{f(\beta):\beta<\lambda\}$ have been constructed such that $Y^{\sss{O}}_\beta\subseteq Y^{\sss{{\mathfrak{u}O}}}_\beta\subseteq Y^{\sss{O}}_{f(\beta)}$ for every $\beta<\lambda$. The inclusion $A_{\lambda}\subseteq B_{\lambda}$ follows by the induction hypothesis and therefore
\[Y^{\sss{O}}_\lambda=(A_\lambda)^{\sss{O}}_1\subseteq (B_\lambda)^{\sss{O}}_1\subseteq (B_\lambda)^{\sss{\mathfrak{u}O}}_1=Y^{\sss{\mathfrak{u}O}}_\lambda,\]
where the last inclusion follows by Remark \ref{rem2} {\rm(i)}. Set $\omega:=\sup\{f(\beta):\beta<\lambda\}$. Then
\[B_\lambda=\bigcup_{\beta<\lambda}Y_\beta^{\sss{\mathfrak{u}O}}\subseteq\bigcup_{\beta<\lambda}Y_{f(\beta)}^{\sss{O}}\subseteq\bigcup_{\beta<\omega}Y^{\sss{O}}_\beta=A_\omega.\]
 Let $x\in (B_{\lambda})^{\sss{\mathfrak u}O}_1$.  There exists a net $(x_{\sigma})_{\sigma \in \Sigma}$ in $B_\lambda\subset A_\omega$ such that $x_{\sigma} \converges{{$\mathfrak{u}O$}} x$, i.e  $(x_{\sigma} \wedge t) \vee s \converges{O} (x \wedge t) \vee s$ for every $s,t \in  L$. For any triple $\sigma$, $\sigma'$ and $\sigma''$ in $\Sigma$, the element $(x_\sigma\wedge x_{\sigma'})\vee x_{\sigma''}$ lies in $A_\omega$ because the latter is a sublattice by Proposition \ref{10.1}. This implies that $(x\wedge x_{\sigma'})\vee x_{\sigma''}$ belongs to $(A_\omega)^{\sss{O}}_1=Y^{\sss{O}}_{\omega}$, for every $\sigma'$, $\sigma''$ in $\Sigma$.  By the same reasoning, it follows that $x\vee x_{\sigma''}\in Y^{\sss{O}}_{\omega+1}$ for every $\sigma''\in\Sigma$.  Using that $Y^{\sss{O}}_{\omega+1}$ is a sublattice, we note that $(x\vee x_{\sigma})\wedge(x\vee x_{\sigma'})\in Y^{\sss{O}}_{\omega+1}$ for every $\sigma,\sigma'\in\Sigma$.  Invoking once more the $\mathfrak{u}$O-convergence of $(x_\sigma)_{\sigma\in\Sigma}$ to $x$, it can be concluded that 
 $x=x\wedge(x\vee x_{\sigma'})$ belongs to $Y^{\sss{O}}_{\omega+2}$.  Define $f(\lambda):=\omega+2$.
\end{proof}

\begin{thm}\label{Thmsublattice}
The O-closure and the $\mathfrak u$O-closure of a sublattice of an infinitely distributive lattice $ L$ coincide, and the resulting subset is again a sublattice of ${L}$.
\end{thm}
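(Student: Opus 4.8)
The plan is to identify the two closures with the stabilized adherence levels $Y^{\sss{O}}_\alpha$ and $Y^{\sss{\mathfrak{u}O}}_\beta$ introduced in Remark \ref{rem2}{\rm(iii)}, and then to sandwich each between the other using inclusions already established. Since $Y^{\sss{O}}_\alpha$ is by definition the O-closure of $Y$ and, by Proposition \ref{10.1}, is a sublattice of $L$, the sublattice assertion follows at once as soon as the two closures are shown to coincide. Thus the entire content of the theorem reduces to proving the single set-equality $Y^{\sss{O}}_\alpha=Y^{\sss{\mathfrak{u}O}}_\beta$.

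For one inclusion I would simply invoke Remark \ref{rem2}{\rm(iii)}, which already records $Y^{\sss{O}}_\alpha\subseteq Y^{\sss{\mathfrak{u}O}}_\beta$ (this is precisely where infinite distributivity, via Theorem \ref{4}, enters). For the reverse inclusion the key tool is Proposition \ref{propInclusion}: applied at $\lambda=\beta$ it yields $Y^{\sss{\mathfrak{u}O}}_\beta\subseteq Y^{\sss{O}}_{f(\beta)}$. It then remains to absorb the level $f(\beta)$ back into $\alpha$. Here I would use that the transfinite O-adherence sequence $(Y^{\sss{O}}_\lambda)_{\lambda}$ is increasing and becomes constant from $\alpha$ onwards: from $X\subseteq X^{\sss{O}}_1$ one gets monotonicity, and since $Y^{\sss{O}}_\alpha=Y^{\sss{O}}_{\alpha+1}$, a short transfinite induction shows $Y^{\sss{O}}_\lambda=Y^{\sss{O}}_\alpha$ for every $\lambda\ge\alpha$. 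Consequently $Y^{\sss{O}}_\lambda\subseteq Y^{\sss{O}}_\alpha$ for \emph{every} ordinal $\lambda$; in particular $Y^{\sss{O}}_{f(\beta)}\subseteq Y^{\sss{O}}_\alpha$, and chaining the inclusions produces $Y^{\sss{\mathfrak{u}O}}_\beta\subseteq Y^{\sss{O}}_\alpha$.

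Combining the two inclusions gives $Y^{\sss{O}}_\alpha=Y^{\sss{\mathfrak{u}O}}_\beta$, so the O-closure and the $\mathfrak{u}$O-closure coincide; being equal to $Y^{\sss{O}}_\alpha$, the common closure is a sublattice of $L$ by Proposition \ref{10.1}. I do not anticipate a genuine obstacle at this final stage, because essentially all of the difficulty has been front-loaded into Proposition \ref{propInclusion}, whose proof exploited the sublattice structure to push a $\mathfrak{u}$O-limit back into a finite iterate of the O-adherence (via the passage through $x\vee x_{\sigma''}$ and then $x=x\wedge(x\vee x_{\sigma'})$). The one point deserving a little care in the writing is the bookkeeping that every O-adherence level is contained in the stabilized closure $Y^{\sss{O}}_\alpha$, i.e.\ that $\alpha$ is genuinely absorbing for the O-adherence operator; this is exactly the content of Remark \ref{rem2}{\rm(iii)} together with monotonicity of the adherence, and it is what lets $f(\beta)$, however large, be collapsed down to $\alpha$.
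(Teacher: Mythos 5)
Your proof is correct and follows essentially the same route as the paper: both arguments identify the two closures with the stabilized adherences $Y^{\sss{O}}_\alpha$ and $Y^{\sss{\mathfrak{u}O}}_\beta$, take the inclusion $Y^{\sss{O}}_\alpha\subseteq Y^{\sss{\mathfrak{u}O}}_\beta$ from Theorem \ref{4}, and obtain the reverse inclusion from Proposition \ref{propInclusion} combined with the stabilization of the O-adherence at $\alpha$. The only (cosmetic) difference is that the paper applies Proposition \ref{propInclusion} at $\lambda=\alpha$ and $\lambda=f(\alpha)$ to conclude that $Y^{\sss{O}}_\alpha$ is itself $\mathfrak{u}$O-closed, whereas you apply it at $\lambda=\beta$ and then collapse $Y^{\sss{O}}_{f(\beta)}$ down to $Y^{\sss{O}}_\alpha$ via monotonicity.
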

\begin{proof}
Let $Y$ be a sublattice of $ L$.  Let 
\[\alpha:=\min\{\lambda\ge 0:Y^{\sss{O}}_\lambda=Y^{\sss{O}}_{\lambda+1}\}\quad\text{and}\quad\beta:=\min\{\lambda\ge 0:Y^{\sss{\mathfrak{u}O}}_\lambda=Y^{\sss{\mathfrak{u}O}}_{\lambda+1}\},\]
i.e. O-closure of $Y$ equals $Y^{\sss{O}}_\alpha$ and the $\mathfrak{u}$O-closure of $Y$ equals $Y^{\sss{\mathfrak{u}O}}_\beta$. These are lattices by Proposition \ref{10.1}, and $Y^{\sss{O}}_\alpha\subseteq Y^{\sss{\mathfrak{u}O}}_\beta$ by Theorem \ref{4}.  By definition of $\alpha$, observe that
\[Y^{\sss{O}}_\alpha=Y^{\sss{O}}_{f(\alpha)}=Y^{\sss{O}}_{f(f(\alpha))}\]
and therefore $Y^{\sss{O}}_\alpha=Y^{\sss{\mathfrak{u}O}}_\alpha=
Y^{\sss{\mathfrak{u}O}}_{f(\alpha)}$, by Proposition \ref{propInclusion}.  In particular, we get that $Y^{\sss{O}}_\alpha=Y^{\sss{\mathfrak{u}O}}_{\alpha}=Y^{\sss{\mathfrak{u}O}}_{\alpha+1}$, i.e. $Y^{\sss{O}}_\alpha$ is $\mathfrak{u}O$-closed.  Therefore $Y^{\sss{\mathfrak{u}O}}_\beta$ (the $\mathfrak{u}$O-closure of $Y$) is contained in $Y^{\sss{O}}_\alpha$.
\end{proof}

\begin{cor}\label{10}
	A sublattice of an infinitely distributive lattice is $O$-closed if and only if it is $\mathfrak{u}$O-closed.
\end{cor}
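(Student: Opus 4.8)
The plan is to read off the corollary directly from Theorem~\ref{Thmsublattice}. The one conceptual ingredient needed to connect the two statements is the standard topological principle that a set is closed (in any topology) exactly when it equals its own closure; here I would apply it to both the order topology and the $\mathfrak{u}$O-topology. Concretely, for a sublattice $Y$ I would first check that $Y$ is O-closed, i.e.\ $Y=Y^{\sss{O}}_1$, if and only if $Y$ coincides with its O-closure $Y^{\sss{O}}_\alpha$. The forward direction is a one-line transfinite induction: from $Y=Y^{\sss{O}}_1$ one gets $Y^{\sss{O}}_\lambda=Y$ for every $\lambda$, so $\alpha=0$ and the closure is $Y$ itself. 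For the reverse direction I would use that $Y^{\sss{O}}_\alpha$ is O-closed by construction (this is exactly how $\alpha$ is chosen, the adherences forming an increasing transfinite chain), so $Y=Y^{\sss{O}}_\alpha$ immediately gives $Y=Y^{\sss{O}}_1$. The identical remarks apply to the $\mathfrak{u}$O-topology, giving: $Y$ is $\mathfrak{u}$O-closed iff $Y=Y^{\sss{\mathfrak{u}O}}_\beta$.

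With these equivalences the argument is essentially one line. Letting $Y$ be a sublattice of $L$, Theorem~\ref{Thmsublattice} tells us that its O-closure $Y^{\sss{O}}_\alpha$ and its $\mathfrak{u}$O-closure $Y^{\sss{\mathfrak{u}O}}_\beta$ are the same set; denote it by $\overline Y$. Then $Y$ is O-closed iff $Y=\overline Y$ iff $Y$ is $\mathfrak{u}$O-closed, which is exactly the claim.

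It is worth separating out which half of the equivalence is doing the work. The implication ``$\mathfrak{u}$O-closed $\Rightarrow$ O-closed'' does not need the full theorem: from $Y\subseteq Y^{\sss{O}}_1\subseteq Y^{\sss{\mathfrak{u}O}}_1=Y$, where the middle inclusion is Remark~\ref{rem2}{\rm(i)} (valid since $L$ is infinitely distributive) and the final equality is the hypothesis, one reads off $Y=Y^{\sss{O}}_1$. Hence the only implication with genuine content is that an O-closed sublattice is $\mathfrak{u}$O-closed, and this is precisely where Theorem~\ref{Thmsublattice}---and behind it the transfinite bookkeeping of Proposition~\ref{propInclusion}---is indispensable. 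I therefore expect no real obstacle in the corollary itself: all the difficulty has already been absorbed into the theorem on which it rests.
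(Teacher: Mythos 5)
Your argument is correct and is exactly the derivation the paper intends: the corollary is stated without proof as an immediate consequence of Theorem~\ref{Thmsublattice}, via the observation that a set is O-closed (resp.\ $\mathfrak{u}$O-closed) precisely when it equals its O-closure (resp.\ $\mathfrak{u}$O-closure). Your added remark that the direction ``$\mathfrak{u}$O-closed $\Rightarrow$ O-closed'' already follows from Remark~\ref{rem2}(i) is also accurate and correctly isolates where the theorem is genuinely needed.
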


This example illustrates that, in Theorem \ref{Thmsublattice}, the condition of infinite distributivity is essential and cannot be replaced by the weaker assumption of distributivity.

\begin{exmp}
Consider the following subsets of $2^\R$.
\begin{align*}
\mathscr C_-:=&\{(-\infty,a]:a\le 0\}\\
\mathscr C'_-:=&\{(-\infty,a]:a< 0\}\\
\mathscr C_+:=&\{[a,+\infty):a\ge 0\}\\
\mathscr C'_+:=&\{[a,+\infty):a> 0\}
\end{align*}
The ring $ L$ of subsets of $\R$ generated by $\mathscr C_-\cup\mathscr C_+$
consists of all subsets of $\R$ that have one of the following types: $\emptyset$, $(-\infty,-a]$, $[b,+\infty)$, $(-\infty,-a]\cup [b,+\infty)$, $\{0\}$, where $a,b\ge 0$.
This forms a distributive lattice.  The sub-ring $Y$ generated by $\mathscr C'_-\cup\mathscr C'_+$ consists of all subsets of $\R$ that have one of the following types: $\emptyset$, $(-\infty,-a]$, $[b,+\infty)$, $(-\infty,-a]\cup [b,+\infty)$, where $a,b> 0$. $Y$ is a sublattice of $L$.   The O-closure $\overline Y$ of $Y$  in $L$ consists of the subsets of $\R$ that have one of following types: $\emptyset$, $(-\infty,-a]$,  $[b,+\infty)$,  $(-\infty,-a]\cup [b,+\infty)$, where $a,b\ge 0$.
Observe that the infimum in $\overline Y$ of $(-\infty,0]$ and $[0,+\infty)$ is equal to $\emptyset$, whereas the infimum taken in $L$ equals $\{0\}$.
\end{exmp}

Recall that a subset $A$ of a lattice $L$ is
 called a \emph{down-set} if, for every $a\in A$ and $x\in L$, the condition $x\le a$ implies $x\in A$.  Moreover, $A$ is an 
 \emph{ideal} if 
it is a down-set and closed under finite joins; that is,
 $a\vee b\in A$ for all $a, b\in A$.

\begin{prop}\label{11}
	Let $ A$ be a down-set in a lattice $ L$. Then, every $x \in  A_{1}^{O}$ is the supremum of an increasing net in $ A$. Moreover, if $ L$ is infinitely distributive, $ A_{1}^{O}$ is a down-set.
\end{prop}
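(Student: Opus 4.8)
The statement splits into two essentially independent claims, and I would treat them in turn.

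For the first claim, the plan is to extract the required increasing net directly from the data witnessing O-convergence, with no construction needed. Suppose $x\in A^{\sss{O}}_1$, so there is a net $(x_\gamma)_{\gamma\in\Gamma}$ in $A$ with $x_\gamma\converges{O}x$; let $M\subseteq L$ be directed and $N\subseteq L$ filtered with $\bigvee M=\bigwedge N=x$ and with $(x_\gamma)_{\gamma\in\Gamma}$ eventually in $[m,n]$ for every $(m,n)\in M\times N$. The observation I would build everything on is that $M$ is forced to lie inside $A$: fixing any $n\in N$ (the filtered set is non-empty) and any $m\in M$, the net is eventually in $[m,n]$, so some $x_\gamma$ satisfies $m\le x_\gamma$; since $x_\gamma\in A$ and $A$ is a down-set, $m\in A$. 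Thus $M$ is a directed subset of $A$ with $\bigvee M=x$. Regarding $M$ as a net indexed by itself (ordered by $\le$), it is an increasing net in $A$ whose supremum is $x$, which is precisely what the first claim asserts.

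For the second claim, assume $L$ is infinitely distributive and let $x\in A^{\sss{O}}_1$ and $z\in L$ with $z\le x$; I must produce a net in $A$ that O-converges to $z$. Using the first claim, write $x=\bigvee M$ with $M\subseteq A$ directed. The plan is to cut everything down by $z$: the join-infinite distributive law gives $z=z\wedge x=z\wedge\bigvee M=\bigvee_{m\in M}(z\wedge m)$. Each $z\wedge m$ lies below $m\in A$, hence belongs to $A$ because $A$ is a down-set, and $\{z\wedge m:m\in M\}$ inherits directedness from $M$ (an upper bound $m_3\ge m_1,m_2$ in $M$ yields $z\wedge m_3\ge z\wedge m_1,\,z\wedge m_2$). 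Therefore $\{z\wedge m:m\in M\}$ is an increasing net in $A$ with supremum $z$, so by Remark \ref{remark}{\rm(i)} it O-converges to $z$, whence $z\in A^{\sss{O}}_1$. This establishes that $A^{\sss{O}}_1$ is a down-set.

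The only genuine subtlety — and the step I would flag as the crux — is the observation in the first claim that the directed witness $M$ is automatically contained in $A$; once this is in hand, the increasing net is immediate, and the second claim reduces to a single application of the join-infinite distributive law. The only routine care required is to confirm that the families $M$ and $\{z\wedge m:m\in M\}$ genuinely qualify as increasing nets (each serving as its own directed index set), so that Remark \ref{remark}{\rm(i)} applies.
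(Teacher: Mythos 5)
Your proposal is correct and follows essentially the same route as the paper's proof: extract the directed witness $M$, observe that it must lie in $A$ because $A$ is a down-set, view it as an increasing net, and for the down-set claim meet everything with $z$ and invoke the join-infinite distributive law. You in fact spell out the step $M\subseteq A$ more explicitly than the paper does, but the argument is the same.
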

\begin{proof}
Let $x \in  A_{1}^{O}$, then there exists a net $(x_{\gamma})_{\gamma \in \Gamma}$ in $A$ such that $x_{\gamma} \converges{O} x$. Then there exist a directed set $M$ and a filtered set $N$ such that $\bigvee \mm = x = \bigwedge \nn$ and for every $(m,n) \in M\times N$ the net $(x_{\gamma})_{\gamma \in \Gamma}$ is eventually in $[m,n]$. As $ A$ is a down-set, it follows that $M\subseteq  A$. Result follows from the fact that the set $M$ is directed, so it can be viewed as an increasing net indexed over itself.
  	
  	Furthermore, if $L$ is infinitely distributive, $a \in  A_{1}^{O}$ and $x \in  L$ with $x \leq a$. By the argument above, there exists an increasing net $(a_{\gamma})_{\gamma \in \Gamma} \subseteq  A$ such that $\bigvee_{\gamma \in \Gamma} a_{\gamma} = a$. Using the fact that $ A$ is a down-set, $\{a_{\gamma} \wedge x : \gamma \in \Gamma \} \subseteq  A$ and $\bigvee \{a_{\gamma} \wedge x : \gamma \in \Gamma \} = a \wedge x = x$ concluding that $x \in  A_{1}^{O}$.\end{proof}

\begin{thm}\label{22}
	Let $ L$ be an infinitely distributive lattice and $ A \subseteq  L$ be an ideal. Then $ A_{1}^{O}= A_{1}^{\mathfrak{u}O}$ and both are  $\mathfrak{u}$O-closed (and therefore O-closed) ideals.
\end{thm}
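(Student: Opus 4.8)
The plan is to first establish the single nontrivial inclusion $A_{1}^{O}\supseteq A_{1}^{\mathfrak{u}O}$, since the companion inclusion $A_{1}^{O}\subseteq A_{1}^{\mathfrak{u}O}$ is already recorded in Remark \ref{rem2}{\rm(i)}; all remaining assertions (that the common set is an ideal, and that it is $\mathfrak{u}$O-closed, hence O-closed) will then follow formally. Throughout I lean on Proposition \ref{11}, which tells me that for the down-set $A$ every element of $A_{1}^{O}$ is the supremum of an increasing net in $A$ and that $A_{1}^{O}$ is again a down-set, and on Proposition \ref{38}{\rm(i)}, which converts $\mathfrak{u}$O-convergence into order inequalities.

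For the key inclusion, take $x\in A_{1}^{\mathfrak{u}O}$, witnessed by a net $(x_\gamma)_{\gamma\in\Gamma}$ in $A$ with $x_\gamma\converges{$\mathfrak{u}$O}x$. Since $A$ is a down-set, $z_\gamma:=x_\gamma\wedge x\in A$ for every $\gamma$. Because $L$ is distributive, Remark \ref{remark}{\rm(v)} together with Proposition \ref{a} lets me drop the restriction $s\le t$, so that $\mathfrak{u}$O-convergence to $x$ means $(x_\gamma\wedge t)\vee s\converges{O}(x\wedge t)\vee s$ for \emph{all} $s,t\in L$; substituting $x\wedge t$ for the parameter $t$ then gives $z_\gamma\converges{$\mathfrak{u}$O}x$. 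Now consider $D:=\{a\wedge x:a\in A\}$, which is contained in $A$ (down-set) and is upward directed because $A$ is closed under finite joins. Plainly $x$ is an upper bound of $D$; conversely, if $w$ is any upper bound of $D$, then $z_\gamma\le w$ for every $\gamma$, so Proposition \ref{38}{\rm(i)} yields $x\le w$. Hence $\bigvee D=x$, and viewing the directed set $D\subseteq A$ as an increasing net indexed by itself, Remark \ref{remark}{\rm(i)} shows this net O-converges to $x$, so $x\in A_{1}^{O}$. This proves $A_{1}^{\mathfrak{u}O}\subseteq A_{1}^{O}$, and with Remark \ref{rem2}{\rm(i)} we conclude $A_{1}^{O}=A_{1}^{\mathfrak{u}O}$.

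It remains to record that this common set is a $\mathfrak{u}$O-closed ideal. That $A_{1}^{O}$ is a down-set is Proposition \ref{11}; it is closed under finite joins because, writing $x=\bigvee_i x_i$ and $y=\bigvee_j y_j$ as suprema of increasing nets in $A$ (Proposition \ref{11}), the net $(x_i\vee y_j)_{(i,j)}$ lies in the ideal $A$, is increasing, and has supremum $x\vee y$, so $x\vee y\in A_{1}^{O}$. Thus $A_{1}^{O}$ is an ideal. The same device shows $A_{1}^{O}$ is O-closed: an element of $(A_{1}^{O})_{1}^{O}$ is, by Proposition \ref{11} applied to the down-set $A_{1}^{O}$, the supremum of an increasing net in $A_{1}^{O}$, each term of which is itself a directed supremum of elements of $A$; collecting all these elements of $A$ and passing to their finite joins produces an increasing net in $A$ with the same supremum, which therefore lies in $A_{1}^{O}$. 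Finally, $\mathfrak{u}$O-closedness follows by applying the inclusion just proved to the ideal $A_{1}^{O}$ itself: $(A_{1}^{O})_{1}^{\mathfrak{u}O}=(A_{1}^{O})_{1}^{O}=A_{1}^{O}$, the last equality being the O-closedness; O-closedness is then in any case immediate from Remark \ref{rem2}{\rm(i)}.

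The step I expect to require the most care is the identity $\bigvee\{a\wedge x:a\in A\}=x$. The naive idea of applying Proposition \ref{37} to $(x_\gamma)$ (or to $(x_\gamma\wedge x)$) fails, because such a net need not be order bounded \emph{below}, so $\mathfrak{u}$O-convergence cannot be upgraded to O-convergence directly. The resolution is precisely the combination above: meeting with $x$ keeps everything inside the down-set $A$ and below $x$, and Proposition \ref{38}{\rm(i)} is then strong enough to pin down every upper bound of $D$, so that the order limit is recovered as a genuine directed supremum rather than through order boundedness.
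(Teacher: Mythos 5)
Your proof is correct, but it reaches the central equality $A_{1}^{O}=A_{1}^{\mathfrak{u}O}$ by a genuinely different route. The paper never proves the inclusion $A_{1}^{\mathfrak{u}O}\subseteq A_{1}^{O}$ directly: it first shows that $A_{1}^{O}$ is an ideal and is O-closed (via the directed set $B=\{a\in A:\exists\gamma,\ a\le x_\gamma\}$ — essentially your ``collect the elements of $A$ and take finite joins'' step), and then invokes Theorem \ref{Thmsublattice} to identify $A_{1}^{O}$ with the $\mathfrak{u}$O-closure of $A$, squeezing $A_{1}^{\mathfrak{u}O}$ in between. That route rides on the transfinite-induction machinery of Proposition \ref{propInclusion}. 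You instead give a self-contained one-step argument: replace the witnessing net by $z_\gamma=x_\gamma\wedge x$ (which stays in the down-set $A$), check via Proposition \ref{a} and Remark \ref{remark}(v) that $z_\gamma$ still $\mathfrak{u}$O-converges to $x$, and use Proposition \ref{38}(i) to identify $x$ as the supremum of the directed set $\{a\wedge x:a\in A\}\subseteq A$. This is more elementary, avoids Theorem \ref{Thmsublattice} entirely, and moreover isolates exactly where the down-set structure is used; the $\mathfrak{u}$O-closedness then follows cleanly by applying your inclusion to the ideal $A_{1}^{O}$ itself. What the paper's approach buys is economy given that Theorem \ref{Thmsublattice} has already been established for general sublattices; what yours buys is independence from that theorem and a direct explanation of why ideals, unlike general sublattices, need only one adherence step. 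Your closing caveat is also well taken: Proposition \ref{37} is indeed not applicable here since the nets need not be order bounded below, and your detour through $z_\gamma$ and Proposition \ref{38}(i) is the right fix.
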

\begin{proof}
	
If $ A$ is an ideal, then $ A_{1}^{O}$ is an ideal by Proposition \ref{10.1} and Proposition \ref{11}.
 
Next we show that $ A_{1}^{O}$ is $O$-closed. By Proposition \ref{11}  and the fact that $ A_{1}^{O}$ is an ideal, for every $x \in  A_{2}^{O}$ there exists an increasing net $(x_{\gamma})_{\gamma \in \Gamma}$ in $ A_{1}^{O}$ such that $\bigvee_{\gamma \in \Gamma} x_{\gamma} = x$. Let $B:=\{a \in  A : \exists \gamma\in\Gamma\ \mbox{such that }a \leq x_{\gamma} \}$.  Then $B$ is non-empty, directed, and $x$ is an upper-bound of $B$. Finally, observe that if $k\in L$ and $b \leq k$ for every $b \in B$, then  $x_{\gamma} \leq k$ for every $\gamma \in \Gamma$, and therefore $x \leq k$.  This shows that  that $x$ is the supremum of $B$ and so  $x\in  A_{1}^{O}$. So $ A_2^O= A_1^O$ and therefore $ A_{1}^{O}$ is $O$-closed. This implies that  $ A_1^O$ is equal to the O-closure of $ A$.  Theorem \ref{Thmsublattice} yields that $ A_1^O$ is the $\mathfrak{u}$O-closure of $ A$, and therefore $ A_1^O= A_{1}^{\mathfrak{u}O}$.   
	\end{proof}

We now aim to establish an analogue of the previous theorem, replacing ideals with regular sublattices. Note that every ideal of a lattice is, in particular, a regular sublattice.

Given an infinitely distributive lattice $L$ and a regular sublattice $Y$ we ask: does the first $\mathfrak{u}O$-adherence $Y_{1}^{\mathfrak{u}O}$ cover the O-closure of $Y$?
In \cite[Theorem 2]{GAOLEU17}, a positive answer to this question is given in the special case where 
$L$ is an Archimedean Riesz space with the countable sup property and admitting a seperating family of order-continuous positive linear functionals, and $Y$ is a   Riesz subspace.
In this paper, we provide a positive answer under a purely order-theoretical condition, which we introduce below.

\begin{defn}
		Let $Y$ be a sublattice of a lattice $L$. Then $Y$ is said to have Property (A) if for $A \subseteq Y$ and $x \in A^{-}$, there exists $y \in A^{-}\cap {Y}$ such that $x \leq y$. Dually, $Y$ is said to have Property (B) if for $A \subseteq Y$ and $x \in A^{+}$, there exists $y \in A^{+}\cap{Y}$ satisfying $x \geq y$.
	\end{defn}

In Theorem \ref{n}, we  shall prove that if $L$ is an infinitely distributive lattice, and $Y \subseteq L$ a sublattice satisfying Properties (A) and (B), then $Y^{\sss{\mathfrak{u}{O}}}_1=Y^{\sss{{O}}}_1$, and both are simultaneously O-closed and $\mathfrak{u}$O-closed.

The remainder of the paper is dedicated to proving Theorem \ref{n}, which we accomplish by first establishing two intermediate results (Theorem \ref{28} and Theorem \ref{30})  that we believe may be of independent interest.

Let $P$ be a poset.  The set of upper-bounds  of the subset $ D$ of $P$  is denoted by $ D^{+}$ and the set of lower-bounds  is denoted by  $ D^{-}$.  If $D=D^{+-}$, then we say that $D$ is a lower-cut (l-cut) of $P$.  Clearly, $\emptyset$ and $P$ are l-cuts of $P$.    
Recall that the Dedekind-MacNeille completion of $ P$, denoted by $ \DM (P)$, is the set of all l-cuts of $P$, ordered by set inclusion.    $\DM (P)$ forms a \emph{complete lattice}  satisfying the following properties.
 \begin{enumerate}[{\rm(i)}]
 \item $ D^-$  belongs to $ \DM(P)$ for every $ D\subseteq P$.
 \item If $\set{ D_i}{i\in I}\subseteq \DM (P)$  then 
 \[\bigvee_{i\in I}{\!}^{\DM (P)}D_i=\left(\bigcup_{i\in I} D_i\right)^{+-}\quad\text{and}\quad\bigwedge_{i\in I}{\!}^{\DM (P)}D_{i}=\bigcap_{i\in I} D_{i}\,.\]
   \item  $(\leftarrow,x]\in \DM (P)$ for every $x\in P$ and the function $\varphi:P\to \DM(P)$ defined by $\varphi(x):=(\leftarrow,x]$ is an \emph{order-isomorphism}.
     \item $\varphi[ P]$ is \emph{join-dense} and \emph{meet-dense} in $\DM(P)$, i.e.
   \[ a =\bigvee{}^{\!{ \DM(P)}}\,\set{\varphi(x)}{x\in P,\,\varphi(x)\le  a}\,,\]
   and 
   \[ a =\, \bigwedge{}^{\! \DM(P)}\set{\varphi(x)}{x\in P,\,\varphi(x)\ge  a}\,,\]
   for every $ a\in \DM(P)$.  From this follows that  $\varphi$ preserves all suprema and infima that exist in $ P$, i.e. if $ D\subseteq P$ and $x\in P$, then 
   \begin{align*}
   \bigvee{}^{\! \DM(P)}\varphi[D]=\varphi(x)\ &\Leftrightarrow\ \bigvee{}^{\! P} D=x\\
    \bigwedge{}^{\! \DM(P)} \varphi[D]=\varphi(x)\ &\Leftrightarrow\ \bigwedge{}^{\! P}D=x.
    \end{align*}
   \item Let $D\subseteq P$.  Then
   \[ D^-=\bigwedge{}^{\! \DM(P)}\varphi[ D]=\bigvee{}^{\!{ \DM(P)}}\varphi[ D^-]\,,\]
    and 
    \[ D^{+-}=\bigvee{}^{\!{\DM(P)}}\varphi[ D]=\bigwedge{}^{\! \DM(P)}\varphi[ D^+]\,.\]
  \end{enumerate}
 
The Dedekind-MacNeille completion of $ P$ is characterized -- up to order-isomorphism -- as the unique complete lattice containing $ P$ as a simultaneously join-dense and meet-dense subset.  It is well-known that $\DM(P)$ need not satisfy the same lattice equations that are satisfied by $P$.  In In \cite{Crawley}, the author gives an  example of a distributive lattice which cannot be regularly imbedded in any complete modular lattice.  
Example \ref{exmp2} shows that not every infinitely distributive lattice has an infinitely distributive MacNeille completion.  This stands in sharp contrast to the situation in the setting of Riesz spaces and $\ell$-groups. If the poset $ P$ happens to be a commutative Archimedean $\ell$-group, it is possible to endow $P^\delta:=\DM(P)\setminus \{\emptyset, P\}$ with a group structure to obtain a Dedekind complete $\ell$-group, containing the starting $\ell$-group as a regular $\ell$-subgroup (see \cite{Clifford}).   Because $\ell$-groups are intrinsically infinitely distributive, it follows that their Dedekind–MacNeille completion is likewise infinitely distributive\footnote{Note that for a lattice $L$ without top and bottom elements, $L^\delta:=\DM(L)\setminus\{\emptyset,L\}$ is infinitely distributive if and only if $\DM(L)$ is infinitely distributive.}.

\begin{exmp}\label{exmp2}
 When endowed with the pointwise partial order, 
 \[{L}:= \{(0,b) : 0 \leq b < 1\} \cup  \{(1,b) : 0 \leq b<+\infty,\,b\neq 1\}\] forms an infinitely distributive lattice.  It is easy to see that
  \begin{align*}
  \DM(L)&= \{(0,b) : 0 \leq b < 1\} \cup  \{(1,b) : 0 \leq b \leq +\infty\}\,, \\
   L^\delta&=\{(0,b) : 0 < b < 1\} \cup  \{(1,b) : 0 \leq b < +\infty\}\,.
   \end{align*}
 Let us show that $L^\delta$ (and hence $\DM(L)$) does not satisfy the Join-Infinite Distributive Law.  Let $x_{n} = (0,1-\frac{1}{n})$. Then $\lbigvee{{L}^{\delta}} x_{n} = (1,1)$ and $\left(\lbigvee{{L}^{\delta}} x_{n}\right) \wedge (1,\frac{1}{2}) = (1,\frac{1}{2})$. On the  other hand, $\lbigvee{{L}^{\delta}} (x_{n}\wedge (1,\frac{1}{2})) = (0,\frac{1}{2})$. 
\end{exmp}

Let $Y$ be a sublattice of a lattice $\lattice$.  In Theorem \ref{28} we show that the MacNeille completion $\DM(Y)$ can be identified with a subset of $\DM(L)$.  

If $Y$ is a sublattice of a lattice $\lattice$ and $ D$ is a  subset of $Y$, define $ D^{+_{Y}}:= D^+\cap Y$ and $ D^{-_{Y}}:= D^-\cap Y$.


\begin{lem}\label{24}
Let $Y$ be a sublattice of a lattice $\lattice$.  If $A,B \subseteq Y$ such that $A^{+-} \subseteq B^{+-}$, then $A^{+_{Y} -_{Y} }\subseteq  B^{+_{Y} -_{Y}}$.
\end{lem}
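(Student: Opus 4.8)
The plan is to reduce the statement to elementary properties of the polarity (Galois connection) given by the upper- and lower-bound operators. Recall that, for subsets of $L$, the maps $D\mapsto D^{+}$ and $D\mapsto D^{-}$ are order-reversing and satisfy the standard identity $D^{+-+}=D^{+}$; the corresponding maps $D\mapsto D^{+_{Y}}=D^{+}\cap Y$ and $D\mapsto D^{-_{Y}}=D^{-}\cap Y$ are the polarities computed inside the sublattice $Y$, and they are order-reversing on subsets of $Y$. These are the only ingredients the argument will use.

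First I would apply $(\cdot)^{+}$ to the hypothesis $A^{+-}\subseteq B^{+-}$. Since this operator is antitone, it reverses the inclusion, giving $(B^{+-})^{+}\subseteq (A^{+-})^{+}$. Invoking the identity $D^{+-+}=D^{+}$ on both sides collapses these to $B^{+}\subseteq A^{+}$. Next I would intersect this inclusion with $Y$: because $D^{+_{Y}}=D^{+}\cap Y$ by definition, and intersection with a fixed set preserves inclusions, I obtain $B^{+_{Y}}\subseteq A^{+_{Y}}$.

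Finally I would apply the $Y$-lower-bound operator $(\cdot)^{-_{Y}}$, which is again order-reversing: if $S\supseteq T$ are subsets of $Y$ then $S^{-}\subseteq T^{-}$, hence $S^{-}\cap Y\subseteq T^{-}\cap Y$. Applied to $B^{+_{Y}}\subseteq A^{+_{Y}}$ it yields $(A^{+_{Y}})^{-_{Y}}\subseteq (B^{+_{Y}})^{-_{Y}}$, which is precisely $A^{+_{Y}-_{Y}}\subseteq B^{+_{Y}-_{Y}}$, as required.

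There is no genuine obstacle here: the argument uses only the Galois-connection identities, which hold in any poset, together with the trivial fact that intersecting with $Y$ respects inclusions. The only point demanding care is the bookkeeping of the two polarities — the operators with and without the subscript $Y$ — and the observation that passing from $L$-bounds to $Y$-bounds is exactly intersection with $Y$. In particular, no lattice structure, distributivity, or completeness is needed, so the same proof works verbatim for arbitrary posets.
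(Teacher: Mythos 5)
Your proof is correct and follows essentially the same route as the paper's: both arguments hinge on deducing $B^{+}\subseteq A^{+}$ from the hypothesis via the Galois identity $D^{+-+}=D^{+}$ and then restricting to $Y$. Your finish is marginally more streamlined --- you apply the antitone operator $(\cdot)^{-_{Y}}$ directly to $B^{+_{Y}}\subseteq A^{+_{Y}}$, whereas the paper first establishes $A\subseteq B^{+_{Y}-_{Y}}$ and then invokes monotonicity and idempotence of the closure $(\cdot)^{+_{Y}-_{Y}}$ --- but the two are the same elementary polarity computation.
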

\begin{proof}
	We start by showing that $B^{+_{Y}} \subseteq A^{+}$. Indeed, $B^{+_{Y}} \subseteq B^{+} = B^{+-+} \subseteq A^{+-+} = A^{+}$.
	
   Next note that $A^{+-_{Y}} \subseteq B^{+_{Y}-_{Y}}$. Indeed, let $y \in A^{+-_{Y}}$. Then $y \in Y$ and $y \leq b$ for every $b \in A^{+}$. From $B^{+_{Y}} \subseteq A^{+}$ it is clear that $y \leq c$ for every $c \in B^{+_{Y}}$. This concludes that $A^{+-_{Y}} \subseteq B^{+_{Y} -_{Y}}$. Finally from $A \subseteq A^{+-_{Y}} \subseteq B^{+_{Y} -_{Y}}$ it follows that $ A^{+_{Y} -_{Y}} \subseteq  B^{+_{Y} -_{Y}}$.
\end{proof}

\begin{lem}\label{25}
	Let $Y$ be a sublattice of a lattice $\lattice$.  For $A \subseteq Y$, $(A^{+-} \cap Y)^{+-} = A^{+-}$.
\end{lem}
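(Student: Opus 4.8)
The plan is to recognize $S\mapsto S^{+}$ and $S\mapsto S^{-}$ as an antitone Galois connection on the powerset of $L$ ordered by inclusion, so that the composite $S\mapsto S^{+-}$ is a closure operator. I would first record its three defining properties, all of which are purely order-theoretic and hold for arbitrary $S,T\subseteq L$: \emph{extensivity} $S\subseteq S^{+-}$ (every $d\in S$ lies below every upper bound of $S$, hence $d\in S^{+-}$); \emph{monotonicity} $S\subseteq T\Rightarrow S^{+-}\subseteq T^{+-}$ (immediate from the antitonicity of $+$ and $-$); and \emph{idempotency} $(S^{+-})^{+-}=S^{+-}$, which follows from the identity $S^{+-+}=S^{+}$. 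Equivalently, the l-cuts are exactly the fixed points of $S\mapsto S^{+-}$, so $A^{+-}$ is itself an l-cut. I would note that the sublattice hypothesis on $Y$ is not actually needed for this lemma; the argument goes through for any subset $Y\subseteq L$.

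With these in hand the two inclusions are short. For $A^{+-}\subseteq(A^{+-}\cap Y)^{+-}$: since $A\subseteq A^{+-}$ by extensivity and $A\subseteq Y$ by hypothesis, we have $A\subseteq A^{+-}\cap Y$, and applying monotonicity yields $A^{+-}\subseteq(A^{+-}\cap Y)^{+-}$. For the reverse inclusion $(A^{+-}\cap Y)^{+-}\subseteq A^{+-}$: from $A^{+-}\cap Y\subseteq A^{+-}$, monotonicity gives $(A^{+-}\cap Y)^{+-}\subseteq(A^{+-})^{+-}$, and idempotency collapses the right-hand side to $A^{+-}$. Combining the two inclusions gives the claimed equality.

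Since the whole proof rests on the closure-operator formalism, there is no real obstacle beyond correctly invoking idempotency, that is, the fact that $A^{+-}$ is already cut-closed. I would make sure to state $S^{+-+}=S^{+}$ (and hence $(S^{+-})^{+-}=S^{+-}$) explicitly, as this is the one step that is not a one-line unwinding of definitions. Everything else reduces to monotonicity together with the trivial containment $A\subseteq A^{+-}\cap Y$.
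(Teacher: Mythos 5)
Your proof is correct and follows essentially the same route as the paper's: the paper likewise derives $(A^{+-}\cap Y)^{+-}\subseteq A^{+-}$ from monotonicity and idempotency of the cut operator, and the reverse inclusion from $A\subseteq A^{+-}\cap Y$. Your observation that the sublattice hypothesis on $Y$ is not needed is also accurate; the paper simply states it for uniformity with the surrounding results.
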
	
\begin{proof}
	First note that $(A^{+-} \cap Y) \subseteq A^{+-}$ implies that $(A^{+-} \cap Y)^{+-} \subseteq A^{+-}$. The other inclusion follows by noting that $A \subseteq A^{+-}$ and $A \subseteq Y$, then $A \subseteq A^{+-} \cap Y$ concludes that $A^{+-} \subseteq (A^{+-} \cap Y)^{+-}$.
\end{proof}

\begin{thm}\label{28}
	Let $L$ be a lattice and $Y \subseteq L$ be a sublattice. Then 
	\[i:\DM(Y)\to\DM(L): A\mapsto A^{+-} \]
	is an order-embedding of $\DM(Y)$ into $\DM(X)$.  
\end{thm}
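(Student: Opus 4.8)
The plan is to verify three things: that $i$ genuinely maps into $\DM(L)$, that it preserves inclusion, and that it reflects inclusion. Since the order on both completions is set inclusion, an order-embedding is exactly a map satisfying $A\subseteq B\Leftrightarrow i(A)\subseteq i(B)$, and the reflecting half will also deliver injectivity for free. So the two ``easy'' halves are well-definedness and monotonicity, and the one carrying the actual content is order-reflection.

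For well-definedness I would observe that for any $A\in\DM(Y)$ the image $i(A)=A^{+-}$ is just $(A^{+})^{-}$, the set of lower bounds in $L$ of the subset $A^{+}\subseteq L$; by property (i) of the MacNeille completion recorded above, every set of the form $D^{-}$ is an l-cut of $L$, so $i(A)\in\DM(L)$. Monotonicity is the standard fact that $(\cdot)^{+-}$ is inclusion-preserving: if $A\subseteq B$ then $B^{+}\subseteq A^{+}$, whence $A^{+-}\subseteq B^{+-}$. Both of these are routine and use nothing beyond the listed properties.

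The substantive step is to show that $i$ reflects inclusions, i.e.\ that $A^{+-}\subseteq B^{+-}$ forces $A\subseteq B$ for $A,B\in\DM(Y)$. Here I would invoke Lemma~\ref{24}, which turns the inclusion of $L$-closures into the inclusion of $Y$-closures: from $A^{+-}\subseteq B^{+-}$ it yields $A^{+_{Y}-_{Y}}\subseteq B^{+_{Y}-_{Y}}$. Because $A$ and $B$ are l-cuts of $Y$, they are fixed points of the $Y$-closure operator, that is $A=A^{+_{Y}-_{Y}}$ and $B=B^{+_{Y}-_{Y}}$, so the inclusion just obtained reads precisely $A\subseteq B$. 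This is the heart of the argument, and the \emph{only} place where the hypothesis that $A$ and $B$ lie in $\DM(Y)$ (rather than being arbitrary subsets of $Y$) enters; essentially all the difficulty has been absorbed into Lemma~\ref{24}, whose proof carefully compares upper-bound sets computed inside $Y$ with those computed in the ambient lattice $L$. Finally, combining the preserving and reflecting properties, $i(A)=i(B)$ gives both $A\subseteq B$ and $B\subseteq A$, hence $A=B$, so $i$ is injective; together with the equivalence $A\subseteq B\Leftrightarrow i(A)\subseteq i(B)$ this establishes that $i$ is an order-embedding.
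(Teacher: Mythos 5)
Your proof is correct and follows essentially the same route as the paper, whose entire argument is the single line ``This follows by Lemma \ref{24}''; you have simply made explicit the routine steps (well-definedness via property (i) of the completion, monotonicity of $(\cdot)^{+-}$, and order-reflection from Lemma \ref{24} together with the fact that elements of $\DM(Y)$ are fixed points of $(\cdot)^{+_{Y}-_{Y}}$) that the paper leaves to the reader.
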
	
\begin{proof}
 This follows by Lemma \ref{24}.
\end{proof}

\begin{lem}\label{lem21}
	Let $L$ be a lattice and $Y$ be a sublattice satisfying (A) and (B). Then $Y$ is regular.
\end{lem}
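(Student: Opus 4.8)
The plan is to unwind the definition of regularity: the inclusion $Y\hookrightarrow L$ preserves arbitrary suprema and infima precisely when, for every $A\subseteq Y$ whose supremum (respectively infimum) exists in $Y$, that same element is the supremum (respectively infimum) of $A$ computed in $L$. I would therefore split the argument into a supremum part, handled by Property (B), and an infimum part, handled by the dual Property (A). The guiding observation is that Properties (A) and (B) are exactly the statements needed to compare the bounds available in $L$ with those available inside $Y$.

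For the supremum part, suppose $A\subseteq Y$ and $s:=\lbigvee{Y}A$ exists. Since the order on $Y$ is inherited from $L$, the element $s$ is an upper bound of $A$ in $L$, i.e. $s\in A^{+}$. To see that $s$ is the \emph{least} such bound, let $t\in A^{+}$ be arbitrary. By Property (B) applied to $A\subseteq Y$ and $t\in A^{+}$, there exists $y\in A^{+}\cap Y=A^{+_{Y}}$ with $y\le t$. As $y$ is an upper bound of $A$ lying in $Y$ and $s$ is the least upper bound of $A$ in $Y$, we obtain $s\le y\le t$. Hence $s\le t$ for every $t\in A^{+}$, so $s=\lbigvee{L}A$; in particular the supremum in $L$ exists and coincides with the one in $Y$.

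The infimum part is the order dual and would run identically with Property (A) in place of Property (B): if $m:=\lbigwedge{Y}A$ exists, then $m\in A^{-}$, and for any lower bound $t\in A^{-}$ Property (A) furnishes $y\in A^{-}\cap Y$ with $t\le y$; since $m$ is the greatest lower bound of $A$ in $Y$, this gives $t\le y\le m$, whence $m=\lbigwedge{L}A$. Combining the two parts shows that the inclusion preserves all existing suprema and infima, so $Y$ is regular.

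I do not anticipate a genuine obstacle here: the lemma is essentially a reformulation of Properties (A) and (B) in the language of regularity. The only points requiring care are the bookkeeping of the two orders (bounds taken in $L$ versus least/greatest bounds taken in $Y$) and verifying that the witness $y$ produced by (A)/(B) genuinely lies in $Y$, so that it can be compared with the supremum or infimum computed in $Y$.
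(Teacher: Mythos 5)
Your proof is correct and follows essentially the same route as the paper: exhibit the supremum taken in $Y$ as the least upper bound in $L$ by using the witness from Property (B), and dualize with Property (A) for infima. (The paper's own proof invokes ``property (A)'' for the supremum case, but that is evidently a slip of labels; the argument it runs is exactly your Property (B) step.)
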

\begin{proof}
	Let $A \subseteq Y$ and $b = \lbigvee{Y} A$. Assume that there exists $x \in A^{+}$, then by property (A) there exists $a \in A^{+_Y}$ such that $a \leq x$, thus concluding that $b = \lbigvee{L} A$. The dual statement can be proven similarly. 
\end{proof}	

In the next example, we note that the converse of Lemma \ref{lem21} is not true. 

\begin{exmp} \label{exmp3}
    Take $L = \{(0,b) : 0 \leq b \leq 1\} \cup  \{(1,b) : 0 \leq b \leq 1\}$ ordered point-wise and $L_{0} = \{(0,b) : 0 \leq b < 1\} \cup  \{(1,b) : 0 \leq b < 1\}$. Clearly $L$ is complete and $L_{0}$ is regular in $L$. However property (B) fails to hold.
\end{exmp}

\begin{prop}\label{26}
	Let $L$ be a lattice, $Y \subseteq L$ and $A \subseteq Y$.
\begin{enumerate}[(i)]
	\item Property $(A)$ implies that $A^{-+} = A^{-_{Y}+}$;
	\item Property $(B)$ implies that $A^{+-} = A^{+_{Y}-}$.
\end{enumerate}
\end{prop}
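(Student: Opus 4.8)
The plan is to establish each equality by a pair of inclusions, noting that in both parts one inclusion holds unconditionally while the reverse inclusion is precisely where the hypothesis is consumed. Since (i) and (ii) are order-theoretic duals of one another, I would carry out (i) in detail and then obtain (ii) by the dual argument, interchanging the roles of $\cdot^{+}$ and $\cdot^{-}$, of upper and lower bounds, and of Property (A) and Property (B).

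For the free inclusion in (i), I would simply observe that $A^{-_{Y}}=A^{-}\cap Y\subseteq A^{-}$, and that the operation $D\mapsto D^{+}$ is inclusion-reversing; this immediately yields $A^{-+}\subseteq A^{-_{Y}+}$ with no appeal to any hypothesis. The dual observation, using $A^{+_{Y}}=A^{+}\cap Y\subseteq A^{+}$ together with the inclusion-reversing operation $D\mapsto D^{-}$, gives the corresponding free inclusion $A^{+-}\subseteq A^{+_{Y}-}$ in (ii).

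The substantive content is the reverse inclusion $A^{-_{Y}+}\subseteq A^{-+}$, and here Property (A) enters. I would take an arbitrary $z\in A^{-_{Y}+}$, that is, an upper bound of every $Y$-valued lower bound of $A$, and argue that $z$ is in fact an upper bound of every lower bound of $A$. Fixing any $x\in A^{-}$, Property (A) furnishes an element $y\in A^{-}\cap Y=A^{-_{Y}}$ with $x\le y$; since $z$ bounds $A^{-_{Y}}$ from above we have $y\le z$, and transitivity delivers $x\le z$. As $x\in A^{-}$ was arbitrary, $z\in A^{-+}$, closing the inclusion. For (ii) the same reasoning runs with all inequalities reversed: given $z\in A^{+_{Y}-}$ and arbitrary $x\in A^{+}$, Property (B) supplies $y\in A^{+}\cap Y=A^{+_{Y}}$ with $y\le x$, whence $z\le y\le x$ and $z\in A^{+-}$.

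I do not expect a genuine obstacle: each direction is a one-line transitivity chase. The only point demanding care is the bookkeeping — keeping straight which inclusion is automatic and which invokes the hypothesis, and checking that Property (A) indeed produces an element of $Y$ that \emph{dominates} the chosen lower bound $x$ (and dually that Property (B) produces an element of $Y$ \emph{dominated by} the chosen upper bound $x$). Once this matching of the domination direction to the correct property is fixed, part (ii) requires nothing beyond dualising part (i).
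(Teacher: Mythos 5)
Your proof is correct and follows essentially the same route as the paper: the inclusion $A^{-+}\subseteq A^{-_{Y}+}$ is noted to be automatic, and the reverse inclusion is obtained by exactly the same transitivity argument, using Property (A) to replace an arbitrary lower bound of $A$ by a dominating lower bound in $Y$. The paper likewise proves only (i) and dismisses (ii) as dual, so there is nothing to add.
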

\begin{proof}
	We shall only show the proof of (i), as the proof for (ii) follows by a similar argument. It can be seen that $A^{-+} \subseteq A^{-_{Y}+}$. To see the reverse inclusion let $x \in A^{-_{Y}+}$ and $z \in A^{-}$. Then there exists $y \in A^{-_{Y}}$ such that $z \leq y$. This implies that $z \leq y \leq x$.  Hence, $x \in A^{-+}$.  This shows that $A^{-+} \supseteq A^{-_{Y}+}$.
	\end{proof}

\begin{prop}\label{27}
	Let $L$ be a lattice, $Y \subseteq L$ a sublattice of $L$ and $\{A_{\alpha} : \alpha \in \mathcal{A} \}$ a collection of sets in $Y$ satisfying $A_{\alpha}^{+_{Y}-_{Y}} = A_{\alpha}$.
	\begin{enumerate}[(i)]
		\item If $Y$ satisfies Property (A) then 
\[\bigcap_{\alpha \in \mathcal{A}} A_{\alpha}^{+-} = \left(\bigcap_{\alpha \in \mathcal{A}} A_{\alpha}\right)^{+-}\,.\]
		\item If $Y$ satisfies Property (B)  then 
\[\left(\bigcup_{\alpha \in \mathcal{A}} A_{\alpha}\right)^{+_{Y}-_{Y}+-} = \left(\bigcup_{\alpha \in \mathcal{A}} A_{\alpha}^{+-}\right)^{+-}\,.\]
	\end{enumerate}
\end{prop}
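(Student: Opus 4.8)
The plan is to read the two identities as the assertion that the order-embedding $i:\DM(Y)\to\DM(L)$, $A\mapsto A^{+-}$, of Theorem~\ref{28} preserves infima (part (i)) and suprema (part (ii)) of the family of l-cuts $\{A_\alpha\}_{\alpha\in\mathcal A}$. Indeed, the infimum of the $A_\alpha$ in $\DM(Y)$ is $\bigcap_\alpha A_\alpha$ and their supremum is $(\bigcup_\alpha A_\alpha)^{+_{Y}-_{Y}}$, while in $\DM(L)$ the infimum of the images is $\bigcap_\alpha A_\alpha^{+-}$ and their supremum is $(\bigcup_\alpha A_\alpha^{+-})^{+-}$; so the displayed equalities are exactly $i(\bigwedge_\alpha A_\alpha)=\bigwedge_\alpha i(A_\alpha)$ and $i(\bigvee_\alpha A_\alpha)=\bigvee_\alpha i(A_\alpha)$. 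Throughout I will use only the elementary polarity facts $S\subseteq T\Rightarrow T^{+}\subseteq S^{+}$, $S^{+-+}=S^{+}$, and $(\bigcup_\alpha S_\alpha)^{+}=\bigcap_\alpha S_\alpha^{+}$.

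For (ii) I would prove that both sides equal $U^{+-}$, where $U:=\bigcup_\alpha A_\alpha$. On the right, $(\bigcup_\alpha A_\alpha^{+-})^{+}=\bigcap_\alpha A_\alpha^{+-+}=\bigcap_\alpha A_\alpha^{+}=U^{+}$, so $(\bigcup_\alpha A_\alpha^{+-})^{+-}=U^{+-}$. It then suffices to show $(U^{+_{Y}-_{Y}})^{+}=U^{+}$, for then the left side equals $(U^{+})^{-}=U^{+-}$ too. The inclusion $(U^{+_{Y}-_{Y}})^{+}\subseteq U^{+}$ is immediate from $U\subseteq U^{+_{Y}-_{Y}}$. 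The reverse inclusion is the only place Property~(B) is needed: given $u\in U^{+}$, Property~(B) produces some $y\in U^{+_{Y}}=U^{+}\cap Y$ with $y\le u$; since every $w\in U^{+_{Y}-_{Y}}$ satisfies $w\le y$, we get $w\le u$, i.e.\ $u\in(U^{+_{Y}-_{Y}})^{+}$.

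For (i) the inclusion $(\bigcap_\alpha A_\alpha)^{+-}\subseteq\bigcap_\alpha A_\alpha^{+-}$ is routine monotonicity, since $\bigcap_\beta A_\beta\subseteq A_\alpha$ gives $(\bigcap_\beta A_\beta)^{+-}\subseteq A_\alpha^{+-}$ for every $\alpha$. The substantive inclusion is $\bigcap_\alpha A_\alpha^{+-}\subseteq(\bigcap_\alpha A_\alpha)^{+-}$, and this is where both Property~(A) and the hypothesis $A_\alpha^{+_{Y}-_{Y}}=A_\alpha$ are used. Fix $x\in\bigcap_\alpha A_\alpha^{+-}$. Since $A_\alpha^{+_{Y}}=A_\alpha^{+}\cap Y\subseteq A_\alpha^{+}$ and $x$ is a lower bound of $A_\alpha^{+}$, the element $x$ is a lower bound of $A_\alpha^{+_{Y}}$ for every $\alpha$, hence a lower bound of $S:=\bigcup_\alpha A_\alpha^{+_{Y}}\subseteq Y$. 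Property~(A) now yields $y\in S^{-_{Y}}$ with $x\le y$. As a lower bound in $Y$ of each $A_\alpha^{+_{Y}}$, the point $y$ lies in $A_\alpha^{+_{Y}-_{Y}}=A_\alpha$ for every $\alpha$, so $y\in\bigcap_\alpha A_\alpha$; consequently $x\le y\le u$ for every $u\in(\bigcap_\alpha A_\alpha)^{+}$, which is exactly $x\in(\bigcap_\alpha A_\alpha)^{+-}$.

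I expect the main obstacle to be finding the right intermediate set in (i). A direct comparison of the upper-bound sets $(\bigcap_\alpha A_\alpha^{+-})^{+}$ and $(\bigcap_\alpha A_\alpha)^{+}$ is circular, because upper bounds do not distribute over intersections; the trick is to weaken the ``$L$-cut'' membership $x\in A_\alpha^{+-}$ to the $Y$-internal statement that $x$ bounds $A_\alpha^{+_{Y}}$ from below, so that Property~(A) can lift $x$ to a genuine element $y$ of $Y$. The l-cut hypothesis is then indispensable: it is what turns $y\in\bigcap_\alpha A_\alpha^{+_{Y}-_{Y}}$ into $y\in\bigcap_\alpha A_\alpha$, and without it the argument would only place $x$ in $(\bigcap_\alpha A_\alpha^{+_{Y}-_{Y}})^{+-}$, which can be strictly larger.
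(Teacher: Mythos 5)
Your proof is correct and follows essentially the same route as the paper's: in (i) both arguments replace the $L$-upper-bound sets $A_\alpha^{+}$ by the $Y$-upper-bound sets $A_\alpha^{+_{Y}}$, invoke Property (A) to lift a lower bound into $Y$, and then use the hypothesis $A_\alpha^{+_{Y}-_{Y}}=A_\alpha$; in (ii) both show via Property (B) that $(\bigcup_\alpha A_\alpha)^{+_{Y}-_{Y}}$ and $\bigcup_\alpha A_\alpha^{+-}$ have the same upper bounds in $L$. The only difference is presentational: the paper runs the computation as a chain of polarity identities (via Proposition~\ref{26}), whereas you chase elements.
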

\begin{proof}
	\begin{enumerate}[(i)] 
		\item We first note that the inclusion $(\bigcap_{\alpha \in \mathcal{A}} A_{\alpha})^{+-} \subseteq \bigcap_{\alpha} A_{\alpha}^{+-}$ is trivial. To see the reverse inclusion, first note that $\bigcap_{\alpha \in \mathcal{A}} A_{\alpha}^{+-}  = (\bigcup_{\alpha \in \mathcal{A}} A_{\alpha}^{+})^{-}$. Furthermore,
		 \begin{align*}
			 \left(\bigcup_{\alpha \in \mathcal{A}} A_{\alpha}^{+}\right)^{-} &= \left(\bigcup_{\alpha \in \mathcal{A}} A_{\alpha}^{+}\right)^{-+-}\\
			&\subseteq \left(\bigcup_{\alpha \in \mathcal{A}} A_{\alpha}^{+_{Y}}\right)^{-+-} \\
			&=\left (\bigcup_{\alpha \in \mathcal{A}} A_{\alpha}^{+_{Y}}\right)^{-_{Y}+-} \text{ (By property (A))}\\
			&=\left (\bigcap_{\alpha \in \mathcal{A}} A_{\alpha}^{+_{Y}-_{Y}}\right)^{+-} \\
			&= \left(\bigcap_{\alpha \in \mathcal{A}} A_{\alpha}\right )^{+-}.
		\end{align*}
	\item To see this equality, we note that
	\begin{align*}
		\left(\bigcup_{\alpha \in \mathcal{A}} A_{\alpha}\right)^{+_{Y}-_{Y}+-} &= \left(\bigcup_{\alpha \in \mathcal{A}} A_{\alpha}\right)^{+_{Y}-_{Y} +_{Y}-} \text{ (By property (B))}\\
		&=\left (\bigcup_{\alpha \in \mathcal{A}} A_{\alpha}\right)^{+_{Y}-}\\
		&=\left (\bigcup_{\alpha \in \mathcal{A}} A_{\alpha}\right)^{+-} \text{ (By property (B))}\\
		&= \left(\bigcup_{\alpha \in \mathcal{A}} A_{\alpha}^{+-}\right)^{+-}.
	\end{align*}
	\end{enumerate}
\end{proof}

\begin{thm}\label{30}
	Let $L$ be a lattice and $Y \subseteq L$ be a sublattice.  Let $i:\DM(Y)\to \DM(L):A\mapsto A^{+-}$ be order-embedding described in Theorem \ref{28}.
	\begin{enumerate}[(i)]
		\item  If $Y$ satisfies Property (A), then $i$ preserves arbitrary meets.
\item If $Y$ satisfies Property (B), then $i$ preserves arbitrary joins.
	\end{enumerate}
\end{thm}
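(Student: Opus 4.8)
The plan is to reduce both assertions directly to Proposition \ref{27}, once the meets and joins in $\DM(Y)$ and $\DM(L)$ are spelled out explicitly. Recall that an element of $\DM(Y)$ is an l-cut of $Y$, that is, a set $A\subseteq Y$ with $A^{+_{Y}-_{Y}}=A$; this is precisely the hypothesis $A_{\alpha}^{+_{Y}-_{Y}}=A_{\alpha}$ demanded by Proposition \ref{27}. Throughout I would use the standard description of infima and suprema in a Dedekind–MacNeille completion: the infimum of a family of l-cuts is their intersection, whereas the supremum is the closure of their union under the relevant $({}^{+-})$ operator.

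For part (i), let $\{A_{\alpha}:\alpha\in\mathcal A\}$ be a family in $\DM(Y)$. Its meet in $\DM(Y)$ is $\bigcap_{\alpha}A_{\alpha}$, so the left-hand side is $i\bigl(\bigcap_{\alpha}A_{\alpha}\bigr)=\bigl(\bigcap_{\alpha}A_{\alpha}\bigr)^{+-}$. On the other side, the meet in $\DM(L)$ of the images $i(A_{\alpha})=A_{\alpha}^{+-}$ is again their intersection, namely $\bigcap_{\alpha}A_{\alpha}^{+-}$. The required identity $\bigl(\bigcap_{\alpha}A_{\alpha}\bigr)^{+-}=\bigcap_{\alpha}A_{\alpha}^{+-}$ is exactly Proposition \ref{27}(i), which applies because $Y$ enjoys Property (A) and each $A_{\alpha}$ is an l-cut of $Y$. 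Since $i$ is an order-embedding, $i\bigl(\bigwedge A_{\alpha}\bigr)$ is automatically a lower bound of the images, so this equality shows it is in fact the greatest lower bound, i.e.\ $i$ preserves arbitrary meets.

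For part (ii), with the same family, the join in $\DM(Y)$ is $\bigl(\bigcup_{\alpha}A_{\alpha}\bigr)^{+_{Y}-_{Y}}$, so applying $i$ yields $\Bigl(\bigl(\bigcup_{\alpha}A_{\alpha}\bigr)^{+_{Y}-_{Y}}\Bigr)^{+-}=\bigl(\bigcup_{\alpha}A_{\alpha}\bigr)^{+_{Y}-_{Y}+-}$, while the join in $\DM(L)$ of the images is $\bigl(\bigcup_{\alpha}A_{\alpha}^{+-}\bigr)^{+-}$. The identity $\bigl(\bigcup_{\alpha}A_{\alpha}\bigr)^{+_{Y}-_{Y}+-}=\bigl(\bigcup_{\alpha}A_{\alpha}^{+-}\bigr)^{+-}$ is precisely Proposition \ref{27}(ii), valid since $Y$ has Property (B); the same order-embedding remark then upgrades the set equality to preservation of arbitrary joins.

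I expect essentially no obstacle here beyond careful bookkeeping of which polar is taken inside $Y$ and which inside $L$: Proposition \ref{27} was set up exactly so that its two identities coincide with the two compatibility conditions an order-embedding must meet in order to preserve arbitrary meets and joins. The one point requiring a moment's attention is confirming that the supremum in $\DM(Y)$ uses the $Y$-relative closure $({}^{+_{Y}-_{Y}})$ whereas the supremum in $\DM(L)$ uses the $L$-relative closure $({}^{+-})$, which is what accounts for the asymmetric-looking exponents appearing in Proposition \ref{27}(ii).
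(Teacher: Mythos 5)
Your proposal is correct and follows essentially the same route as the paper: both reduce the statement to Proposition \ref{27} by writing the meet in a Dedekind--MacNeille completion as the intersection of l-cuts and the join as the polar closure of the union (relative to $Y$ inside $\DM(Y)$ and relative to $L$ inside $\DM(L)$). Your additional remarks on why each $A_\alpha$ satisfies the hypothesis $A_\alpha^{+_Y-_Y}=A_\alpha$ and on the asymmetry of the exponents are accurate but do not change the argument.
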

\begin{proof}
Let $\{A_{\alpha}: \alpha \in \mathcal{A} \} \subseteq \DM(Y)$.    Then Proposition \ref{27} implies that:
\begin{align*}
  i\left(\bigwedge{}^{\!\DM(Y)}_{\alpha\in\mathcal A}A_\alpha\right)= &  \left(\bigcap_{\alpha \in \mathcal{A}} A_{\alpha}\right)^{+-} \\
  = &\bigcap_{\alpha \in \mathcal{A}} A_{\alpha}^{+-} \\
  = &\bigwedge{}^{\!\DM(L)}_{\alpha\in\mathcal A}\ i(A_\alpha)\,,
\end{align*}
if $Y$ satisfies Property (A); and
\begin{align*}
  i\left(\bigvee{}^{\!\DM(Y)}_{\alpha\in\mathcal A} A_\alpha\right) =&\left (\bigcup_{\alpha \in \mathcal{A}} A_{\alpha}\right)^{+_{Y}-_{Y}+-}  \\
  = & \left(\bigcup_{\alpha \in \mathcal{A}} A_{\alpha}^{+-}\right)^{+-} \\
  = & \bigvee{}^{\!\DM(L)}_{\alpha\in\mathcal A}\  i(A_\alpha)\,,
\end{align*}
if $Y$ satisfies Property (B).
\end{proof}

\begin{cor}\label{31}
Let $L$ be a lattice and $Y$ be a sublattice satisfying Properties (A) and (B). Then $i[\DM(Y)]$ is a regular sublattice of $\DM(L)$.
\end{cor}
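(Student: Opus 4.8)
The plan is to read off this corollary almost directly from Theorems \ref{28} and \ref{30}, the only extra ingredient being that $\DM(Y)$ is a complete lattice and that $i$, being an order-embedding, is an order-isomorphism of $\DM(Y)$ onto its image $i[\DM(Y)]$. That isomorphism property means $i$ transports every meet and join existing in $\DM(Y)$ to the corresponding meet and join computed \emph{within the subposet} $i[\DM(Y)]$, and completeness of $\DM(Y)$ guarantees that all such meets and joins exist. The whole task therefore reduces to comparing meets and joins taken inside $i[\DM(Y)]$ with those taken in the ambient lattice $\DM(L)$, and this comparison is exactly what Theorem \ref{30} provides.

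First I would check that $i[\DM(Y)]$ is a sublattice of $\DM(L)$. Given $A,B\in\DM(Y)$, applying Theorem \ref{30} to the two-element family $\{A,B\}$ (using Property (B) for the join and Property (A) for the meet) yields $i(A\vee^{\DM(Y)}B)=i(A)\vee^{\DM(L)}i(B)$ and $i(A\wedge^{\DM(Y)}B)=i(A)\wedge^{\DM(L)}i(B)$. Hence the join and meet in $\DM(L)$ of two elements of $i[\DM(Y)]$ fall back into $i[\DM(Y)]$, so $i[\DM(Y)]$ is closed under the finite lattice operations of $\DM(L)$.

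Next I would establish regularity. Let $T\subseteq i[\DM(Y)]$ be arbitrary and set $S:=i^{-1}[T]\subseteq\DM(Y)$, which is well-defined by injectivity of $i$. Since $\DM(Y)$ is complete, $\bigwedge^{\DM(Y)}S$ exists, and because $i$ is an order-isomorphism onto its image, the infimum of $T$ computed inside the sublattice $i[\DM(Y)]$ equals $i\!\left(\bigwedge^{\DM(Y)}S\right)$. Theorem \ref{30}(i), which invokes Property (A), identifies this element with $\bigwedge^{\DM(L)}i[S]=\bigwedge^{\DM(L)}T$; thus the infimum taken in the sublattice coincides with the infimum taken in $\DM(L)$. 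The dual computation, using Theorem \ref{30}(ii) and Property (B), gives the same conclusion for arbitrary suprema. Consequently the inclusion $i[\DM(Y)]\hookrightarrow\DM(L)$ preserves arbitrary infima and suprema, which is precisely what it means for $i[\DM(Y)]$ to be a regular sublattice.

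I do not anticipate a genuine obstacle, as the substantive content was already discharged in Theorem \ref{30}. The only point requiring care is the bookkeeping: one must recognise that the phrase ``preserves arbitrary meets/joins'' in Theorem \ref{30}, combined with the completeness of $\DM(Y)$ and the fact that an order-embedding reflects the order (so that meets inside the image really are computed by $i$), is literally the assertion that infima and suprema computed in $i[\DM(Y)]$ agree with those computed in $\DM(L)$ — i.e.\ the regularity condition.
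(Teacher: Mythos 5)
Your argument is correct and is exactly the elaboration the paper intends: Corollary \ref{31} is stated without proof as an immediate consequence of Theorem \ref{30} (with Theorem \ref{28} supplying the order-embedding and completeness of $\DM(Y)$ guaranteeing that all meets and joins exist), which is precisely the route you take. Your careful bookkeeping—that an order-isomorphism onto the image transports infima/suprema of $\DM(Y)$ to infima/suprema computed within the subposet $i[\DM(Y)]$, so that Theorem \ref{30} then identifies these with the ones in $\DM(L)$—is the right way to fill in the omitted details.
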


\begin{prop}\label{32}
	Let $L$ be a lattice and $Y$ be a convex sublattice.
	\begin{enumerate}[(i)]
		\item If $Y$ has a maximal element, it satisfies Property (B);
		\item If $Y$ has a minimal element, it satisfies Property (A).
	\end{enumerate}
\begin{proof}
	We will prove (i), as (ii) can be proved dually. Let $a_{\max}$ be the maximal element of $Y$. Let $A \subseteq Y$ and $x \in A^{+}$. Then by convexity, $a_{\max} \wedge x \in Y$ and $a_{max} \wedge x \in A^{+_{Y}}$.
\end{proof}	
\end{prop}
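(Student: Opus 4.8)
The plan is to prove part (i) directly and then obtain (ii) by the evident order-theoretic dual. The first thing I would record is a preliminary observation that removes an apparent ambiguity in the hypothesis: because $Y$ is a sublattice, a \emph{maximal} element of $Y$ is automatically a \emph{greatest} element. Indeed, if $a_{\max}$ is maximal in $Y$ and $b\in Y$, then $a_{\max}\vee b\in Y$ (as $Y$ is closed under finite joins) and $a_{\max}\vee b\ge a_{\max}$; maximality forces $a_{\max}\vee b=a_{\max}$, i.e. $b\le a_{\max}$. Hence $a_{\max}$ dominates every element of $Y$, and in particular every element of any $A\subseteq Y$.

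For (i), fix $A\subseteq Y$ and an upper bound $x\in A^{+}$; I must produce $y\in A^{+_{Y}}$ with $y\le x$. The natural candidate is $y:=a_{\max}\wedge x$. Two things then need checking. First, $y$ is an upper bound of $A$: for each $a\in A$ we have $a\le x$ (since $x\in A^{+}$) and $a\le a_{\max}$ (by the observation above), so $a\le a_{\max}\wedge x=y$, and thus $y\in A^{+}$. Second, membership $y\in Y$: choosing any $a_{0}\in A$ gives $a_{0}\le a_{\max}\wedge x\le a_{\max}$ with both endpoints $a_{0},a_{\max}\in Y$, so convexity of $Y$ yields $y\in Y$. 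Combining, $y\in A^{+}\cap Y=A^{+_{Y}}$, and trivially $y=a_{\max}\wedge x\le x$, which is exactly Property~(B).

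Part (ii) should follow by reversing all inequalities: if $a_{\min}$ is a minimal (hence, by the dual of the preliminary observation, least) element of $Y$, then for $A\subseteq Y$ and $x\in A^{-}$ the element $y:=a_{\min}\vee x$ lies in $A^{-}$ and, by convexity applied to $a_{\min}\le a_{\min}\vee x\le a_{0}$ for some $a_{0}\in A$, lies in $Y$; together with $y\ge x$ this gives Property~(A).

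The only genuinely delicate points are the two I have flagged: justifying that convexity really does place $a_{\max}\wedge x$ in $Y$ (which is why one needs a lower witness drawn from $A$, and hence tacitly assumes $A$ nonempty), and noting that in the sublattice setting ``maximal'' and ``greatest'' coincide, so that $a_{\max}$ bounds all of $A$ from above. The degenerate case $A=\emptyset$, where $A^{+_{Y}}=Y$, I would dispatch separately (or exclude by convention), since there is then no element of $A$ to serve as the lower endpoint for the convexity argument.
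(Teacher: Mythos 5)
Your proof is correct and follows essentially the same route as the paper's: take $y:=a_{\max}\wedge x$, show it is an upper bound of $A$ lying in $Y$ by convexity, and dualize for (ii). The extra details you supply --- that a maximal element of a sublattice is automatically a greatest element, and that the convexity step needs some $a_{0}\in A$ as a lower endpoint (so the case $A=\emptyset$ must be excluded or handled by convention) --- are legitimate points the paper's one-line proof leaves implicit.
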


\begin{prop}\label{35}
	Let $L$ be a complete lattice and $Y$ a sublattice of $L$. Then $Y$ is $O$-closed if and only if $Y$ is supremum and infimum closed.
\end{prop}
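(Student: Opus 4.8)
The plan is to read ``supremum and infimum closed'' as the requirement that $\lbigvee{L}A\in Y$ and $\lbigwedge{L}A\in Y$ for every $A\subseteq Y$ (these exist since $L$ is complete), and to prove the two implications separately, the genuinely nontrivial one being the converse. For the forward direction, assume $Y=Y^{\sss{O}}_1$ and fix $A\subseteq Y$. First I would pass to the net of finite joins: letting $\aaa=\{B\subseteq A:\vert B\vert<\aleph_0\}$ be directed by inclusion and $a_B:=\lbigvee{L}B$, each $a_B$ lies in $Y$ because $Y$ is a sublattice, the net $(a_B)_{B\in\aaa}$ is increasing, and a routine check gives $\lbigvee{L}\{a_B:B\in\aaa\}=\lbigvee{L}A$. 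Thus $a_B\uparrow\lbigvee{L}A$, so Remark~\ref{remark}(i) gives $a_B\converges{O}\lbigvee{L}A$; since $(a_B)$ lies in $Y$ and $Y$ is O-closed, $\lbigvee{L}A\in Y$. The dual argument with finite meets yields $\lbigwedge{L}A\in Y$.

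For the converse, assume $Y$ is supremum and infimum closed and let $(x_\gamma)_{\gamma\in\Gamma}$ be a net in $Y$ with $x_\gamma\converges{O}x$; I must show $x\in Y$. By Definition~\ref{d1} there are a directed $M\subseteq L$ and a filtered $N\subseteq L$ with $\lbigvee{L}M=\lbigwedge{L}N=x$ such that, for each $(m,n)\in M\times N$, eventually $m\le x_\gamma\le n$. The key device is the tail infimum $l_\gamma:=\lbigwedge{L}\{x_\delta:\delta\ge\gamma\}$; because $\{x_\delta:\delta\ge\gamma\}\subseteq Y$ and $Y$ is infimum closed, each $l_\gamma\in Y$, and $(l_\gamma)_{\gamma\in\Gamma}$ is increasing. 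I would then show $\lbigvee{L}\{l_\gamma:\gamma\in\Gamma\}=x$: for any $m\in M$ pick $\gamma_0$ with $x_\delta\ge m$ for $\delta\ge\gamma_0$, whence $l_{\gamma_0}\ge m$ and the supremum is $\ge\lbigvee{L}M=x$; conversely, for any $\gamma$ and any $n\in N$, choosing $\delta\ge\gamma,\gamma_0$ by directedness gives $l_\gamma\le x_\delta\le n$, so $l_\gamma\le\lbigwedge{L}N=x$ and the supremum is $\le x$. Since each $l_\gamma\in Y$ and $Y$ is supremum closed, $x=\lbigvee{L}\{l_\gamma:\gamma\in\Gamma\}\in Y$, proving $Y=Y^{\sss{O}}_1$.

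The main obstacle I anticipate is precisely in the converse: the witnessing sets $M$ and $N$ for O-convergence are arbitrary subsets of $L$ and need not meet $Y$, so one cannot apply the closure hypotheses to them directly. The decisive idea is to manufacture out of the net itself a monotone net lying inside $Y$ — the tail infima $l_\gamma$ — and to verify that its supremum recaptures the limit $x$; this transfers the closure hypotheses from abstract bounds to data genuinely internal to $Y$. Everything else reduces to routine monotonicity and cofinality arguments in the directed index set.
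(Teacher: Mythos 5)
Your proof is correct and follows essentially the same route as the paper: the forward direction uses the increasing net of finite joins $a_B=\lbigvee{L}B$ over finite subsets of $A$, and the converse uses the tail infima $l_{\gamma'}=\lbigwedge{L}_{\gamma\ge\gamma'}x_\gamma$, which the paper phrases as the identity $\bigvee_{\gamma'}\bigwedge_{\gamma\ge\gamma'}x_\gamma=x=\bigwedge_{\gamma'}\bigvee_{\gamma\ge\gamma'}x_\gamma$ for O-convergent nets in a complete lattice. Your write-up merely makes explicit the cofinality verification that the paper leaves implicit.
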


\begin{proof}
	Assume that $Y$ is O-closed. Let $A \subseteq L$ with $\lbigvee{L} A = x$. Let $\mathscr{F} = \{B \subseteq A : \vert B \vert < \omega \}$.  $\mathscr{F}$ is directed with respect to inclusion. For every $B \in \mathscr{F}$ let $x_{B} = \lbigvee{L}B=\lbigvee{Y}B$ to get an increasing net $\{x_{B} : B\in \mathscr{F} \}$ in $Y$ with $\lbigvee{L}_{B\in\mathscr F}x_{B} = x$, i.e. a net in $Y$ that O-converges to $x$.  Since $Y$ is O-closed,  $x \in Y$. The dual can be proved similarly.
		
	Conversely, assume that for any subset $A$ of $Y$, its supremum and infimum in $L$ belong to $Y$, and let $x \in Y^{\sss O}_{1}$. Then there exists a net $(x_{\gamma})_{\gamma \in \Gamma}$ such that
	\begin{equation*}
		\bigvee{}^{\!L}_{\gamma'\in\Gamma}\bigwedge{}^{\!L}_{\gamma\ge \gamma'} x_{\gamma} = x = 	\bigwedge{}^{\!L}_{\gamma'\in\Gamma}\bigvee{}^{\!L}_{\gamma\ge \gamma'}x_\gamma\,.
	\end{equation*}
	By hypothesis it follows that $x \in Y$.
\end{proof}

\medskip

\begin{prop}\label{36}
	Let $L$ be a complete lattice and $Y$ a complete regular sublattice of $L$. Then $Y$ is O-closed.
\end{prop}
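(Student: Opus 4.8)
The plan is to deduce this directly from Proposition \ref{35}. Since $L$ is complete, that proposition characterizes O-closedness of a sublattice by supremum and infimum closedness: $Y$ is O-closed exactly when $\lbigvee{L} A\in Y$ and $\lbigwedge{L} A\in Y$ for every subset $A\subseteq Y$. Hence the entire task reduces to verifying these two membership conditions, and no analysis of nets or order limits is needed beyond what is already packaged inside Proposition \ref{35}.

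To carry this out I would fix an arbitrary $A\subseteq Y$. Because $Y$ is complete, the supremum $\lbigvee{Y} A$ and the infimum $\lbigwedge{Y} A$ both exist and lie in $Y$. I would then invoke regularity of $Y$: by definition the inclusion embedding of $Y$ into $L$ preserves arbitrary suprema and infima, so $\lbigvee{L} A=\lbigvee{Y} A$ and $\lbigwedge{L} A=\lbigwedge{Y} A$. Combining these gives $\lbigvee{L} A\in Y$ and $\lbigwedge{L} A\in Y$, which is precisely the supremum and infimum closedness required. Applying Proposition \ref{35} then yields that $Y$ is O-closed.

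There is essentially no hard step here; the argument is a direct composition of the two hypotheses with Proposition \ref{35}. The only point deserving attention is the interplay of the two assumptions on $Y$: completeness is what guarantees that the relevant suprema and infima genuinely \emph{exist} inside $Y$, while regularity is what \emph{forces} these to coincide with the corresponding suprema and infima computed in $L$. Once both facts are recorded for an arbitrary $A\subseteq Y$, the conclusion is immediate, so I would keep the write-up to a few lines.
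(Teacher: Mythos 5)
Your argument is correct and is exactly the route the paper takes: the paper's proof is the one-line observation that $Y$ is supremum and infimum closed (completeness giving existence in $Y$, regularity giving agreement with $L$), after which Proposition \ref{35} applies. Your write-up simply spells out that one line in more detail.
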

\begin{proof}
	The result follows by Proposition \ref{35} because $Y$ is supremum and infimum closed.
\end{proof}	

\begin{thm}\label{n}
Let $L$ be an infinitely distributive lattice and $Y \subseteq L$ a sublattice satisfying Properties (A) and (B). Then $Y^{\sss{\mathfrak{u}{O}}}_1=Y^{\sss{{O}}}_1$, and both are simultaneously O-closed and $\mathfrak{u}$O-closed.
\end{thm}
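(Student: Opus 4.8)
The plan is to push the whole question into the complete lattice $\DM(L)$ and to use that, under Properties~(A) and~(B), the embedded copy of $\DM(Y)$ sits there as an O-closed set. Throughout I write $\varphi\colon L\to\DM(L)$, $\varphi(x)=(\leftarrow,x]$, for the canonical order-embedding and $i\colon\DM(Y)\to\DM(L)$, $i(A)=A^{+-}$, for the embedding of Theorem~\ref{28}. By Corollary~\ref{31} the image $i[\DM(Y)]$ is a regular sublattice of $\DM(L)$, and since it is order-isomorphic to the complete lattice $\DM(Y)$ it is itself complete; hence Proposition~\ref{36} gives that $i[\DM(Y)]$ is O-closed in $\DM(L)$. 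Two preliminary facts I would record first: (1) for $y\in Y$ one has $((\leftarrow,y]\cap Y)^{+-}=(\leftarrow,y]$, i.e.\ $\varphi(y)=i((\leftarrow,y]\cap Y)$, so $\varphi[Y]\subseteq i[\DM(Y)]$; and (2) $\varphi$ preserves O-convergence, because it preserves all existing suprema and infima and is an order-isomorphism onto its image, so any directed $M$, filtered $N$ and interval $[m,n]$ witnessing $x_\gamma\converges{O}x$ in $L$ map to witnesses of $\varphi(x_\gamma)\converges{O}\varphi(x)$ in $\DM(L)$.

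The central step I would prove is the identity
\[
Y^{\sss{O}}_1=\varphi^{-1}\bigl[i[\DM(Y)]\bigr].
\]
For $\subseteq$, given $x\in Y^{\sss{O}}_1$ choose a net $(y_\gamma)$ in $Y$ with $y_\gamma\converges{O}x$; then $\varphi(y_\gamma)\converges{O}\varphi(x)$ with $\varphi(y_\gamma)\in\varphi[Y]\subseteq i[\DM(Y)]$, and O-closedness of $i[\DM(Y)]$ forces $\varphi(x)\in i[\DM(Y)]$. For $\supseteq$, if $\varphi(x)=A^{+-}$ with $A\in\DM(Y)$ (so $A\subseteq Y$), then $A^{+-}=\bigvee^{\DM(L)}\varphi[A]$ by the completion identity $D^{+-}=\bigvee\varphi[D]$, whence $\bigvee^{\DM(L)}\varphi[A]=\varphi(x)$ and therefore $\bigvee^{L}A=x$ because $\varphi$ reflects suprema. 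Taking $a_B:=\bigvee B$ over the finite subsets $B$ of $A$ gives an increasing net in $Y$ (here the sublattice property of $Y$ is used) with $a_B\uparrow x$, so $a_B\converges{O}x$ by Remark~\ref{remark}(i) and $x\in Y^{\sss{O}}_1$.

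From this identity the O-closedness of $Y^{\sss{O}}_1$ is immediate: if a net in $\varphi^{-1}[i[\DM(Y)]]$ O-converges to some $x$ in $L$, its $\varphi$-image O-converges to $\varphi(x)$ inside the O-closed set $i[\DM(Y)]$, so $\varphi(x)\in i[\DM(Y)]$ and $x\in\varphi^{-1}[i[\DM(Y)]]=Y^{\sss{O}}_1$. Thus $Y^{\sss{O}}_1$ is an O-closed set containing $Y$ and hence equals the O-closure of $Y$. By Theorem~\ref{Thmsublattice} this O-closure coincides with the $\mathfrak{u}$O-closure, so $Y^{\sss{O}}_1$ is also $\mathfrak{u}$O-closed; and since $Y^{\sss{O}}_1\subseteq Y^{\sss{\mathfrak{u}O}}_1$ by Remark~\ref{rem2}(i) while $Y^{\sss{\mathfrak{u}O}}_1$ is contained in the $\mathfrak{u}$O-closure $=Y^{\sss{O}}_1$, all three sets coincide. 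Being the common closure, they are simultaneously O-closed and $\mathfrak{u}$O-closed, which is the assertion.

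The step I expect to be the main obstacle is the O-closedness of $i[\DM(Y)]$ in $\DM(L)$: this is exactly where Properties~(A) and~(B) are indispensable, since they are what yields (via Theorem~\ref{30} and Corollary~\ref{31}) that $i[\DM(Y)]$ is \emph{regular} in $\DM(L)$ and hence O-closed by Proposition~\ref{36}. A second, more technical care-point is that $\varphi$ is only known to preserve O-convergence, not to reflect it---directed and filtered witnessing families in $\DM(L)$ need not come from $L$---so the inclusion $\supseteq$ cannot be obtained by transporting an O-convergent net back through $\varphi$; instead I would extract the explicit presentation $x=\bigvee^{L}A$ with $A\subseteq Y$ from the join-density and meet-density properties of $\DM(L)$ and then realize $x$ as a monotone limit of finite joins inside $Y$.
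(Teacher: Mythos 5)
Your proposal is correct and follows essentially the same route as the paper: pass to $\DM(L)$, use Corollary \ref{31} and Proposition \ref{36} to see that the embedded copy of $\DM(Y)$ is O-closed there, identify $Y^{\sss{O}}_1$ with $\varphi^{-1}\bigl[i[\DM(Y)]\bigr]$, and finish via Theorem \ref{Thmsublattice}. Your treatment of the reverse inclusion (realizing $x$ as the supremum of an increasing net of finite joins from $A\subseteq Y$) is just a slightly more explicit rendering of the paper's appeal to regularity of $L$ in $\DM(L)$, and your direct verification that $\varphi$ preserves O-convergence replaces the paper's citation of \cite[Theorem 3]{KAECWH}.
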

\begin{proof}
Identify $\DM(Y)$ with $i[\DM(Y)]\subseteq \DM(L)$.   Our hypothesis and Corollary \ref{31} imply that $\DM(Y)$ is a complete regular sublattice of $\DM(L)$.  In particular, by Proposition \ref{36}, $\DM(Y)$ is O-closed (and therefore $\mathfrak{u}$O-closed because $\DM(L)$ has  maximal and minimal elements).  We show that $\DM(Y) \cap L =
	Y^{\sss O}_{1}$.   For every $x \in Y^{\sss O}_{1}$ there exists a net
	$(x_\gamma)_{\gamma\in\Gamma}$ in $Y$ such that $x_{\gamma} \converges{O}
	x$ in $L$. From \cite[Theorem 3]{KAECWH} we have that $x_{\gamma} \converges{O} x$ in $\DM(L)$.  Since $\DM(Y)$ is O-closed in $\DM(L)$, it follows that  $x \in \DM(Y)\cap L$. To see the converse, take $x \in \DM(Y) \cap L$. Let $A = \{y \in Y : y \leq x \}$.  By Theorem \ref{30}, $\DM(Y)$ is regular in $\DM(L)$ and thus$\lbigvee{\DM(L)}A=\lbigvee{\DM{Y}}A= x$.  Since $L$ is regular in $\DM(L)$, it follows that $x\in Y^{\sss O}_{1}$.  This shows that $\DM(Y) \cap L =
	Y^{\sss O}_{1}$, and therefore $Y^{\sss O}_{1}$ is simultaneously O-closed and $\mathfrak{u}$O-closed.
\end{proof}

   \def\bibindent{1em}

\end{document}